\documentclass[reqno]{amsart}
\usepackage{float}
\usepackage{amsmath}
\usepackage{graphicx} 
\usepackage{latexsym}
\usepackage{amsfonts}
\usepackage{amssymb}
\usepackage{color} 
\setcounter{MaxMatrixCols}{10}

\theoremstyle{plain}
\newtheorem{theorem}{Theorem}
\newtheorem{corollary}[theorem]{Corollary}
\newtheorem{lemma}[theorem]{Lemma}
\newtheorem{proposition}[theorem]{Proposition}
\theoremstyle{definition}

\newtheorem{remark}[theorem]{Remark}
\newtheorem*{remark*}{Remark}

\newcommand{\rd}{\mathbb R^d}
\newcommand{\pr}{\mathbf P}
\newcommand{\e}{\mathbf E}

\begin{document}
\title[Alternative constructions of a harmonic function in a cone]
{Alternative constructions of a harmonic function for a random walk in a cone}
\author[Denisov]{Denis Denisov}
\address{School of Mathematics, University of Manchester, Oxford Road, Manchester M13 9PL, UK}
\email{denis.denisov@manchester.ac.uk}

\author[Wachtel]{Vitali Wachtel}
\address{Institut f\"ur Mathematik, Universit\"at Augsburg, 86135 Augsburg, Germany}
\email{vitali.wachtel@math.uni-augsburg.de}

\begin{abstract}
For a random walk killed at leaving a cone we suggest two new constructions of a positive harmonic
function. These constructions allow one to remove a quite strong extendability assumption, which has
been imposed in our previous paper (Denisov and Wachtel, 2015, Random walks in cones).
As a consequence, all the limit results from that paper remain true for cones which are either convex or star-like and $C^2$. 
\end{abstract}


\keywords{Random walk, exit time, harmonic function}
\subjclass{Primary 60G50; Secondary 60G40, 60F17}
\maketitle
{\scriptsize
}

\section{Introduction and the main result.}
Consider a random walk $\{S(n),n\geq1\}$ on $\rd$, $d\geq1$, where
$$
S(n)=X(1)+\cdots+X(n)
$$
and $\{X(n), n\geq1\}$ is a family of independent copies of a random
vector $X=(X_1,X_2,\ldots,X_d)$.  We will assume that 
the random variables have zero mean, unit variance,  
and are uncorrelated, that is 
$\e[X_i]=0, \mbox{var}(X_i)=1$ for $1\le i\le d$ and 
$\mbox{cov}(X_i,X_j)=0$ for  $1\le i<j\le d$.

Denote by $\mathbb{S}^{d-1}$ the
unit sphere of $\rd$ and $\Sigma$ an open and connected subset of
$\mathbb{S}^{d-1}$. Let $K$ be the cone generated by the rays
emanating from the origin and passing through $\Sigma$, i.e.
$\Sigma=K\cap \mathbb{S}^{d-1}$.
Let $\tau_x$ be the exit time from $K$ of the random walk with
starting point $x\in K$, that is,
$$
\tau_x=\inf\{n\ge 1: x+S(n)\notin K\}.
$$
In the present paper we are concerned with the 
existence of a positive  harmonic function $V$ for a random walk killed 
at the exit from $K$, that is a function $V$  which solves 
the following equation 
$$
\e[V(x+X),\tau_x>1]=V(x),\quad x\in K. 
$$

Harmonic function $V(x)$ plays a central role in our approach to  
study of the Markov processes confined to unbounded domains. 
This approach was initiated in \cite{DW10}, where 
we studied random walks in a Weyl chamber, which is an example of a cone. 
These studies were extended in \cite{DW15}, where 
we considered random walks in general cones.   
In  particular, in \cite{DW15} we showed that 
$$
\pr(\tau_x>n)\sim C\frac{V(x)}{n^{p/2}},\quad n\to \infty, 
$$
and proved global and local limit theorems for random walks conditioned on $\{\tau_x>n\}$.
The approach suggested in \cite{DW15} was further extended to one-dimensional random
walks above the curved boundaries~\cite{DW16}, \cite{DSW16}, \cite{DSW18}, integrated
random walks~\cite{DW15b}, \cite{DKW}, products of random matrices~\cite{GLP16}, and
Markov walks~\cite{GLL16}. 

This approach is based on the universality ideas and heavily relies on corresponding
results for Brownian motion, or, more generally, diffusion processes. Thus, an
important role is  played by the harmonic function of the Brownian motion killed at the
boundary of $K$, which  can be described as the minimal (up to a constant), strictly positive
on $K$ solution of the following boundary problem:
$$
\Delta u(x)=0,\ x \in K\quad\text{with boundary condition }u\big|_{\partial
  K}=0.
$$
The function $u(x)$ and constant $p$ can be found as follows.
If $d=1$ then we have only one non-trivial cone $K=(0,\infty)$.
In this case $u(x)=x$ and $p=1$. Assume now that $d\geq2$.
Let $L_{\mathbb{S}^{d-1}}$ be the Laplace-Beltrami operator on
$\mathbb{S}^{d-1}$ and assume that $\Sigma$ is regular with respect to $L_{\mathbb{S}^{d-1}}$.
With this assumption, there exists a complete set of orthonormal eigenfunctions
$m_j$  and corresponding eigenvalues $0<\lambda_1<\lambda_2\le\lambda_3\le\ldots$ satisfying
\begin{align}
 \label{eq.eigen}
  L_{\mathbb{S}^{d-1}}m_j(x)&=-\lambda_jm_j(x),\quad x\in \Sigma\\
  \nonumber m_j(x)&=0, \quad x\in \partial \Sigma .
\end{align}
Then
$$
p=\sqrt{\lambda_1+(d/2-1)^2}-(d/2-1)>0.
$$
and the harmonic function $u(x)$ of the Brownian motion is given by
\begin{equation}
\label{u.from.m}
u(x)=|x|^pm_1\left(\frac{x}{|x|}\right),\quad x\in K.
\end{equation}
We refer to \cite{BS97} for further details on exit times of Brownian motion. 
For symmetric stable L\'evy processes asymptotics for exit times and 
related questions have been considered in \cite{BB04}, \cite{BPW18} and \cite{KRS18}, 
see also references therein.

In \cite{DW15} we showed that one construct a harmonic function for the random walk killed at $\tau_x$ as 
follows
$$
V(x)=\lim_{n\to \infty} \e[u(x+S(n),\tau_x>n].
$$
The existence and positivity of $V$ was shown under certain assumptions. 
The geometric assumptions in \cite{DW15} can be summarised as follows, 
\begin{enumerate}
  \item $K$ is either starlike with $\Sigma$ in $C^2$ or 
  convex. We say that $K$ is starlike if there exists $x_0\in \Sigma$ such that
    $x_0+K\subset K$ and ${\rm dist}(x_0+K, \partial K)>0$.
    Clearly, every convex cone is also starlike, for the proof see Remark 15 in \cite{DW15}.

  \item We assume that there exists an open and connected set
    $\widetilde \Sigma\subset \mathbb{S}^{d-1}$ with ${\rm
        dist}(\partial \Sigma, \partial \widetilde \Sigma)>0$ such that
    $\Sigma\subset \widetilde \Sigma$ and the
    function $m_1$  can be extended to $\widetilde \Sigma$ as a solution to
    (\ref{eq.eigen}).
\end{enumerate}
  
Assumption (ii) is quite restrictive.  For this assumption to hold it is necessary
to assume that the boundary of the cone is piecewise infinitely differentiable. But
this condition is not sufficient. The restriction (ii) excludes many cones which are 
of interest in various mathematical problems. For example, it is not clear whether 
(ii) holds for linear transformations of the orthant $\mathbb{R}_+^d$, $d\ge 2$ which
appear often in paths enumeration problems in combinatorics. (It is worth mentioning that
(ii) holds for any simply connected open cone in $\mathbb{R}^2$. This follows from the 
observation that $m_1(x)=\sin(C_1+C_2x)$ in this two-dimensional situation.) 

We have shown in \cite{DW15} that the condition (ii) can be dropped in the case when the 
random walk $\{S(n)\}$ has bounded jumps, Raschel and Tarrago \cite{RT18} have recently
shown that (ii) can be removed under  stronger than in \cite{DW15} moment restrictions on 
the vector $X$. The {\it main aim} of this paper is to show that this assumption can be
removed without imposing any further conditions. Namely, we prove that (i) is sufficient
and the following result holds  
\begin{theorem}
  \label{thm:C2}
  Assume that either the cone $K$ is convex or $\Sigma$ is $C^2$ and $K$ is starlike.
  If $\mathbf{E}|X|^{\alpha}$ is finite for $\alpha = p$ if $p > 2$ or for some $\alpha > 2$ if $p \le 2$, then the
  function 
  $$
  V(x):=\lim_{n\to\infty}\mathbf{E}\left[u(x+S(n));\tau_x>n\right]
  $$
  is finite and harmonic for $\{S(n)\}$ killed at leaving $K$, i.e.,
  $$
  V(x)=\mathbf{E}\left[V(x+S(n));\tau_x>n\right],\quad x\in K,\ n\ge1.
  $$
  Furthermore, $V(x)$ is strictly positive on the set 
  \begin{align*}
K_+:=&\left\{x\in K:\text{ there exists }\gamma>0\text{ such that for every } R>0\right.\\
&\left.\text{ there exists }n\text{ such that }
\mathbf{P}(x+S(n)\in D_{R,\gamma},\tau_x>n)>0\right\},
\end{align*}
where
$D_{R,\gamma}:=\{x\in K:|x|\geq R, {\rm dist}(x,\partial K)\geq\gamma|x|\}$.
  \end{theorem}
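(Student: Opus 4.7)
The plan is to prove finiteness, harmonicity, and positivity of $V$ while circumventing the extendability hypothesis of \cite{DW15}. The obstacle to the earlier approach is that it relies on Taylor-expanding $u$ in a neighbourhood of the closure of $K$, which is unavailable when $m_1$ does not extend across $\partial \Sigma$. I would get around this by exploiting that $u$ is smooth strictly inside $K$, and by combining two complementary constructions, each of which replaces the missing extension with a different device tailored to the geometry in assumption (i).

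For existence of the limit, the first construction would couple the random walk $S(n)$ with a Brownian motion $B(n)$ of matching covariance, via Skorokhod embedding or KMT; the moment condition $\mathbf{E}|X|^\alpha<\infty$ yields a useful approximation rate. Since $u$ is exactly harmonic for $B$ killed at $\partial K$, one has $\mathbf{E}[u(x+B(n));\tau^B_x>n]=u(x)$. The task then reduces to controlling
$$
\bigl|\mathbf{E}[u(x+S(n));\tau_x>n]-\mathbf{E}[u(x+B(n));\tau^B_x>n]\bigr|,
$$
which decomposes into (a) the discrepancy $|u(x+S(n))-u(x+B(n))|$ on trajectories that remain well away from $\partial K$, controlled by interior $C^2$-smoothness of $u$ and the coupling rate, and (b) the probability that one process exits while the other survives. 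Step (b) is where the convexity or starlike $C^2$ hypothesis enters essentially: it gives sharp estimates on the Brownian position density near the boundary in terms of $u$ itself. The second construction would restart the problem at the first entry time $T$ into an interior subcone such as $D_{R,\gamma}$ and analyse $\mathbf{E}[u(x+S(T));T<\tau_x]$; there $u$ is smooth on a neighbourhood, so the classical expansion estimates apply and provide an independent candidate. Matching the two constructions pins down the limit and shows it is $V(x)$.

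Harmonicity then follows from the Markov property together with the uniform integrability of $u(x+S(n))\mathbf{1}_{\{\tau_x>n\}}$ obtained during the finiteness proof, yielding
$$
\mathbf{E}[V(x+S(1));\tau_x>1]=\lim_{n\to\infty}\mathbf{E}[u(x+S(n+1));\tau_x>n+1]=V(x),
$$
and iteratively the same identity for every $n\ge 1$. Positivity on $K_+$ is then immediate: by definition there exist $\gamma,R,n$ with $\mathbf{P}(x+S(n)\in D_{R,\gamma},\tau_x>n)>0$, and on $D_{R,\gamma}$ the lower bound $u(y)\ge cR^p$ holds since $m_1$ is strictly positive on the interior of $\Sigma$; harmonicity then gives $V(x)\ge cR^p\,\mathbf{P}(x+S(n)\in D_{R,\gamma},\tau_x>n)>0$. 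The hard part is undoubtedly step (b): controlling the rare paths that cross $\partial K$ without any information about $u$ outside $K$, which is what forces the argument to rely on the geometric regularity of the cone and on the specific moment assumption to tame the contribution of long excursions near the boundary.
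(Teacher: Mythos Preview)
Your plan differs substantially from both of the paper's proofs, and two steps are not actually carried out.

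The positivity argument is circular as written. Harmonicity gives $V(x)=\mathbf{E}[V(x+S(n));\tau_x>n]$, so to conclude $V(x)>0$ you need $V(y)>0$ on $D_{R,\gamma}$, not $u(y)>0$. The bound $u(y)\ge c_\gamma R^p$ is correct but useless until you have established $V(y)\sim u(y)$ (or at least $V\ge c\,u$) there. In the paper this comes as a byproduct of the finiteness proof: one obtains an explicit estimate $|V(x)-u(x)|\le C\bigl(1+|x|^{p-\gamma}+|x|^p/{\rm dist}(x,\partial K)^\gamma\bigr)$, which forces $V(tx)\sim u(tx)$ uniformly on $D_{R,\gamma}$ as $t\to\infty$; only then does positivity on $K_+$ follow via the harmonicity argument you sketch. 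Your step~(b) is likewise the entire difficulty, and you give no mechanism for it. Under the stated moment assumption ($\mathbf{E}|X|^p<\infty$ for $p>2$, or $\mathbf{E}|X|^{2+\delta}<\infty$ for $p\le 2$) a KMT-type coupling yields an error that is only barely below the $n^{1/2}$ scale of the walk, and converting this into control of $\mathbf{E}[u(x+S(n));\tau_x>n,\tau^B_x\le n]$ (weighted by a function that can be of size $n^{p/2}$) is precisely the boundary problem you set out to avoid. The paper in fact remarks that the coupling route of Raschel--Tarrago removes assumption~(ii) only under strictly stronger moment conditions than those here.

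Neither proof in the paper uses a coupling. The first proof's key device is a deterministic, time-dependent drift into the cone: with $g_k=k^{1/2-\gamma}R_0x_0$ (where $x_0+K\subset K$ and ${\rm dist}(R_0x_0+K,\partial K)>1$), one has ${\rm dist}(x+g_k+S(k),\partial K)\ge k^{1/2-\gamma}$ on $\{\tau_x>k\}$, so Taylor expansion of $u$ at the shifted point is legitimate without any extension of $u$ across $\partial K$. One then shows that $\mathbf{E}[u(x+g_k+S(k));\tau_x>k]$ converges via a telescoping decomposition, controlling each piece with McConnell's moment bounds $\mathbf{E}[\tau_x^{\beta/2}]\le C(1+|x|^\beta)$ for $\beta<p$, and separately that removing the shift $g_k$ changes the expectation by $o(1)$. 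The second proof is closer in spirit to your ``second construction'': it proves $\mathbf{E}[u(x+S(k));\tau_x>k]=(1+O(n^{-q}))u(x)$ uniformly in $k\le n$ for $x$ in an interior region $\widetilde K_{n,\varepsilon}$, then iterates along a super-geometrically growing time sequence, restarting at the first entrance into $\widetilde K_{n,\varepsilon}$ at each stage.
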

  
We will present two very different proofs of this theorem. 
The first proof uses preliminary bounds for the moments of exit times of $\tau_x$
due to \cite{MC84}, see Lemma~\ref{moments} below. The proof is similar to that in
\cite{DW15}, but we use an additional idea of time-dependent shifts inside the cone.
Thus the approach is reminiscent of one-dimensional random walks conditioned to stay
above curved boundaries~\cite{DSW16}. 

The second proof combines time-dependent shifts with an iterative procedure similar to 
that in \cite{DW10} and \cite{DW15b}. The main advantage of this approach is that in
principle no preliminary information on moments of exit times is needed. However, we use
\cite{MC84} to obtain optimal moment conditions. If we assume two additional moments then
this approach becomes self-contained, see Remark~\ref{without_mc} below. 

A further advantage of new constructions consists in the fact that we do not use estimates
for the concentration function of the random walk $\{S(n)\}$, which were important for the 
method used in \cite{DW15}.

Since the geometric assumption (ii) has been used in \cite{DW15} in the construction of
$V(x)$ only, Theorem~\ref{thm:C2} allows us to state limit theorems for random walks in 
cones proven in \cite{DW15} and in \cite{DuW15} for all cones satisfying (i).
\begin{corollary}
\label{cor:int.lim}
Under the conditions of Theorem~\ref{thm:C2}, as $n\to\infty$,
\begin{equation*}
\mathbf{P}(\tau_x>n)\sim \varkappa V(x)n^{-p/2},
\end{equation*}
\begin{equation*}
\mathbf{P}\left(\frac{x+S(n)}{\sqrt{n}}\in\cdot\big|\tau_x>n\right)\to\mu\quad\text{weakly},
\end{equation*}
where $\mu$ is a probability measure on $K$ with the density $H_0u(y)e^{-|y|^2/2}$. Furthermore,
the process $\left\{\frac{x+S([nt])}{\sqrt{n}},\ t\in[0,1]\right\}$ conditioned on $\{\tau_x>n\}$
converges weakly in the space $D([0,1],\|\cdot\|_\infty)$.
\end{corollary}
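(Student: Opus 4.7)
The strategy is to observe that assumption (ii) of \cite{DW15} enters the proofs of the three limit results there (and of the functional CLT in \cite{DuW15}) only through the construction of the harmonic function $V(x)$; once $V$ is provided with the properties listed in Theorem~\ref{thm:C2}, the remaining arguments rely solely on assumption (i), on the Brownian harmonic function $u$ inside $K$, and on standard moment/coupling estimates for the walk. The plan is therefore to invoke Theorem~\ref{thm:C2} to supply $V$ and then transcribe the arguments of \cite{DW15, DuW15} line by line.

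For the tail asymptotic $\mathbf{P}(\tau_x > n)\sim \varkappa V(x)n^{-p/2}$ the proof splits the time axis into a short initial window of length $n^{1-\varepsilon}$ and the complementary window of length of order $n$. On the initial window one uses the near-harmonicity of $u(x+S(k))$ on the event $\{\tau_x > k\}$ together with the very definition $V(x)=\lim_k \mathbf{E}[u(x+S(k));\tau_x>k]$ to replace $u$ by $V$. On the long window one applies a KMT-type strong approximation to couple $S$ with a Brownian motion, so that the problem reduces to the well-known Brownian asymptotic $\mathbf{P}_y(\tau^{\mathrm{bm}}_K>n)\sim C\,u(y)n^{-p/2}$ from \cite{BS97}. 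The approximation error is controlled using the moment bound on $X$ together with estimates for paths that remain close to $\partial K$, which use only the smoothness of $\Sigma$ or convexity of $K$ from (i); the relative harmonicity of $V$ obtained in Theorem~\ref{thm:C2} glues the two windows.

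The weak convergence of $(x+S(n))/\sqrt n$ conditioned on $\{\tau_x>n\}$ to $\mu$ then follows by integrating the above asymptotic against a bounded continuous test function: the initial window contributes a factor proportional to $V(x)$, while the Brownian half contributes the conditional distribution of a Brownian motion at time $1$ conditioned to stay in $K$, whose density is proportional to $u(y)e^{-|y|^2/2}$. The functional limit theorem in $D([0,1],\|\cdot\|_\infty)$ is obtained by the same two-window decomposition combined with the functional CLT for Brownian motion conditioned to stay in $K$, exactly as in \cite{DuW15}.

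The main obstacle is not analytic but bibliographic: one has to audit the proofs in \cite{DW15} and \cite{DuW15} and verify that every use of assumption (ii) there can be isolated to the construction of $V(x)$, rather than also entering, say, the coupling step or the identification of $\mu$. The authors essentially assert this in the paragraph preceding the corollary; once it is checked, Theorem~\ref{thm:C2} slots in as a drop-in replacement and no further work is required.
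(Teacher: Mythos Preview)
Your high-level outline is correct, but the claim that Theorem~\ref{thm:C2} ``slots in as a drop-in replacement and no further work is required'' is too optimistic, and this is precisely where the paper spends its effort in Section~4.

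The difficulty is in the initial window. In \cite{DW15} the crucial step is to show that
\[
\mathbf{E}\bigl[u(x+S(\nu_n));\tau_x>\nu_n,\ \nu_n\le n^{1-\varepsilon}\bigr]\to V(x),
\]
where $\nu_n$ is the random time at which the walk first enters $K_{n,\varepsilon}$. You invoke ``the very definition $V(x)=\lim_k\mathbf{E}[u(x+S(k));\tau_x>k]$'' here, but that limit is along deterministic times; it does not by itself give convergence at the random stopping time $\nu_n$. In \cite{DW15} this step was handled using the analytic extension of $u$ from assumption~(ii), so it is \emph{not} isolated to the construction of $V$. The paper therefore has to redo it: it replaces $u(x+S(\nu_n))$ by the shifted quantity $u(x+g_{\nu_n}+S(\nu_n))$ via \eqref{diff-bound1}, and then uses the pathwise decomposition \eqref{decomp_main} into $W^{(1)}_k,W^{(2)}_k,W^{(3)}_k$. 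The a.s.\ and $L^1$ convergence established in Lemmas~\ref{lem:W1}--\ref{lem:W3}, together with an optional stopping argument for the martingale $Y_k=W^{(3)}_k-\sum_{l<k}f(x+g_l+S(l))\mathbb{I}\{\tau_x>l\}$ and the exponential bound \eqref{old_lemma_14} on $\{\nu_n>n^{1-\varepsilon},\tau_x>n^{1-\varepsilon}\}$, are all needed to push the limit through the random time.

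So the ``bibliographic audit'' you propose would in fact turn up a genuine use of (ii) outside the construction of $V$, and the fix is analytic: the shifted decomposition developed in Section~3 must be reused at the stopping time $\nu_n$. The rest of your outline (Brownian coupling on the long window, identification of $\mu$, functional CLT via \cite{DuW15}) is accurate.
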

\begin{corollary}
Assume that $X$ takes values on a lattice $R$ which is a non-degenerate linear transformation 
of $\mathbb{Z}^d$. Then, under the assumptions of Theorem~\ref{thm:C2},
$$
\sup_{y\in D_n(x)}\left|n^{p/2+d/2}\mathbf{P}\left(x+S(n)=y,\tau_x>n\right)-
      C_0 V(x) u\left(\frac{y}{\sqrt{n}}\right)e^{-|y|^2/2n}\right|\to 0,
$$
where
$$
D_n(x):=\{y\in K:\,\mathbf{P}(x+S(n)=y)>0\}.
$$
The constant $C_0$ is a product of the volume of the unit cell in $R$ and of a factor, which
depends on the periodicity of the distribution of $X$.
\end{corollary}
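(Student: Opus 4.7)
The plan is to deduce this local limit theorem directly from the corresponding result in \cite{DuW15}, exploiting the observation — stated explicitly in the paragraph preceding Corollary~\ref{cor:int.lim} — that the restrictive assumption (ii) of \cite{DW15} entered \cite{DW15} and \cite{DuW15} only through the construction of the harmonic function $V$. Since Theorem~\ref{thm:C2} now supplies $V$ under the weaker hypothesis (i) and the stated moment condition, no genuinely new probabilistic input should be required.

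First I would apply Theorem~\ref{thm:C2} to obtain a finite, harmonic function $V$ that is strictly positive on $K_+$. Next I would trace the proof in \cite{DuW15} and verify that $V$ enters there only through: (a) its abstract properties (finiteness, harmonicity, positivity on $K_+$); (b) the tail asymptotic $\mathbf{P}(\tau_x>n)\sim \varkappa V(x)n^{-p/2}$ and the weak limit $\mu$ supplied by Corollary~\ref{cor:int.lim}; and (c) moment bounds of the form $\mathbf{E}[V(x+S(n));\tau_x>n]\le C V(x)$, which follow from the constructive formula $V(x)=\lim_n\mathbf{E}[u(x+S(n));\tau_x>n]$ together with the moment estimates for $\tau_x$ borrowed from \cite{MC84}. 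The argument itself is a standard two-scale decomposition: write $x+S(n)=(x+S(m))+(S(n)-S(m))$ with $m=n-\lfloor n^\beta\rfloor$ for some $\beta\in(1/2,1)$, use Corollary~\ref{cor:int.lim} to approximate the conditional law of the first summand by $\mu$, and apply a classical Gnedenko-type lattice local limit theorem to the independent increment $S(n)-S(m)$. The constant $C_0$ absorbs the unit-cell volume of the lattice $R$ and the periodicity factor of $X$.

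The main obstacle is uniformity of the approximation in $y\in D_n(x)$: one must handle simultaneously the bulk regime $|y|\asymp\sqrt n$, points $y$ approaching $\partial K$ where $u(y/\sqrt n)$ degenerates, and the tail regime $|y|\gg\sqrt n$. This uniformity was the technical heart of \cite{DuW15} and was established there using only ingredients (a)--(c). Once Theorem~\ref{thm:C2} supplies $V$, the argument should transfer verbatim; the only substantive task is to confirm line by line that no intermediate step of \cite{DuW15} silently invoked (ii), which the authors already assert is not the case.
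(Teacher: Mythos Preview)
Your overall strategy---invoke the existing local limit theorem once Theorem~\ref{thm:C2} supplies $V$---is exactly what the paper does, and the paper is in fact even more terse than you: it gives no separate proof of this corollary at all, relying entirely on the sentence ``Since the geometric assumption (ii) has been used in \cite{DW15} in the construction of $V(x)$ only, Theorem~\ref{thm:C2} allows us to state limit theorems \ldots''. Section~4 then illustrates the needed adaptations only for the tail asymptotic in Corollary~\ref{cor:int.lim}, and asserts that the other results are handled ``similarly''.

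Two small corrections. First, the local limit theorem you are invoking is Theorem~5 of \cite{DW15}, not a result of \cite{DuW15}; the latter paper supplies the functional convergence part of Corollary~\ref{cor:int.lim}. Second, you omit a point the paper does make explicitly: Theorem~5 in \cite{DW15} was stated under strong aperiodicity, and to get the present corollary for an arbitrary lattice $R$ one replaces the local limit theorem from Spitzer~\cite{Sp76} by Stone's version~\cite{Stone67}, which is where the periodicity factor in $C_0$ comes from. Your sketch of the two-scale decomposition is a reasonable description of how \cite{DW15} proceeds, but note that the argument does not transfer entirely verbatim: as Section~4 shows for the tail asymptotic, one must use the new representation of $V$ via the shifted expectations $\mathbf{E}[u(x+g_k+S(k));\tau_x>k]$ and the decomposition \eqref{decomp_main} in place of the old construction that relied on the extended cone. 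The paper simply asserts these adaptations are analogous and does not write them out.
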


In the proof of Theorem 5 in \cite{DW15} we have required the strong aperiodicity of $X$. 
This has been done to use the simplest version of the local limit theorem for unrestricted
random walks from Spitzer's book \cite{Sp76}.  But this standard result can be replaced by Stone's local
limit theorem which is valid for all lattice walks, see \cite{Stone67}.
\section{Preliminary estimates}
We first collect some useful facts about the classical harmonic function $u(x)$.
\begin{lemma}
\label{lem:harnack}
There exists a constant $C=C(d)$ such that for $x\in K$
\begin{align}
\left|\nabla u(x)\right|\le C\frac{u(x)}{{\rm dist}(x,\partial K)},\nonumber\\
\left|u_{x_i}\right|\le C\frac{u(x)}{{\rm dist}(x,\partial K)},\nonumber\\
\left|u_{x_ix_j}\right|\le C\frac{u(x)}{{\rm dist}(x,\partial K)^2},\nonumber\\
\left|u_{x_ix_j x_k}\right|\le C\frac{u(x)}{{\rm dist}(x,\partial K)^3}.
\label{eq:bound.u}
\end{align}

\end{lemma}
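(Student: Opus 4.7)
My plan is to derive all four bounds from two classical ingredients applied on balls of radius proportional to $\mathrm{dist}(x,\partial K)$: the interior Cauchy estimates for harmonic functions, and Harnack's inequality for positive harmonic functions. The function $u$ defined in \eqref{u.from.m} is harmonic in the interior of $K$ (this is exactly the statement $\Delta u=0$ in $K$) and strictly positive there, so the whole argument is local and purely dimensional; no geometric assumptions on $\Sigma$ beyond what guarantees existence of $u$ are needed.

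Fix $x\in K$ and set $r:=\mathrm{dist}(x,\partial K)/2$. Then the closed ball $\overline{B(x,2r)}$ is contained in $K$, and $u$ is positive and harmonic on it. The first step is to invoke Harnack's inequality on $B(x,r)$ (with respect to the larger ball $B(x,2r)$ on which $u$ is harmonic and positive): there is a constant $C_1=C_1(d)$ such that
$$
\sup_{y\in B(x,r)}u(y)\le C_1\,u(x).
$$
The second step is the standard interior derivative estimate for harmonic functions, which says that for every multi-index $\alpha$ there is a constant $C_\alpha=C_\alpha(d)$ with
$$
\bigl|D^{\alpha}u(x)\bigr|\le \frac{C_\alpha}{r^{|\alpha|}}\sup_{y\in B(x,r)}|u(y)|.
$$
This follows from the Poisson integral representation of $u$ on $B(x,r)$ by differentiating the kernel, or equivalently from the mean-value property applied to each component of $\nabla^{|\alpha|} u$, which is also harmonic.

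Combining the two estimates with the definition $r=\mathrm{dist}(x,\partial K)/2$ yields, for $|\alpha|\in\{1,2,3\}$,
$$
\bigl|D^{\alpha}u(x)\bigr|\le \frac{2^{|\alpha|}C_\alpha C_1\,u(x)}{\mathrm{dist}(x,\partial K)^{|\alpha|}},
$$
which is exactly the claimed family of inequalities (the first follows from the second since $|\nabla u|^2=\sum_i u_{x_i}^2$ is controlled by $d$ times the maximum of $|u_{x_i}|$). Renaming constants gives a single $C=C(d)$ that works for all four bounds, establishing the lemma.

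The only thing to be careful about is ensuring $u$ is genuinely harmonic on the whole interior of $K$, not just smooth up to the boundary; but this is guaranteed by the construction $u(x)=|x|^p m_1(x/|x|)$ together with \eqref{eq.eigen}, so no boundary regularity of $\Sigma$ is required for this step. I do not anticipate any real obstacle here: the estimate is purely interior, the Harnack constant and the Cauchy constants depend only on the dimension, and the exponents $1,2,3$ on $\mathrm{dist}(x,\partial K)$ appear automatically from scaling the radius $r$ in the Cauchy bound.
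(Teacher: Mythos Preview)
Your proof is correct and follows essentially the same approach as the paper: both combine the Harnack inequality on a ball of radius comparable to $\mathrm{dist}(x,\partial K)$ with interior derivative estimates for harmonic functions. The only cosmetic difference is that the paper spells out the first-derivative Cauchy estimate explicitly via the mean-value property and the Gauss--Green theorem before iterating, whereas you cite it as a standard result.
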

\begin{proof}
Recalling that every partial derivative $u_{x_i}$ is harmonic and using the mean value theorem for
harmonic functions, we obtain
$$
u_{x_i}(x)=\frac{1}{{\rm Vol}(B(x,r))}\int_{B(x,r)}u_{x_i}(y)dy,
$$
where $B(x,r)$ is the ball of radius $r$ around $x$ and $r<{\rm dist}(x,\partial K)$.
By the Gauss-Green theorem,
$$
u_{x_i}(x)=\frac{1}{{\rm Vol}(B(x,r))}\int_{\partial B(x,r)}u(z)(\nu(z), e_i)dz,
$$
where $\nu(z)$ is the outer normal at $z$. Choosing $r={\rm dist}(x,\partial K)/2$ and applying
the Harnack inequality in the ball $B(x,{\rm dist}(x,\partial K))$, we conclude that
$$
|u_{x_i}(x)|\le 3\cdot2^{d-2}\frac{{\rm Vol}(\partial B(x,r))}{{\rm Vol}(B(x,r))}u(x)
=3d \cdot  2^{d-1}\frac{u(x)}{{\rm dist}(x,\partial K)}.
$$
This implies the desired estimate for $u_{x_i}(x)$. Since 
$u_{x_j}$ is harmonic as well 
we can write 
\begin{align*}
|u_{x_ix_j}|&=
\left|
\frac{1}{{\rm Vol}(B(x,r))}\int_{\partial B(x,r)}
u_{z_i}(z)(\nu(z), e_j)dz
\right|\\ 
&\le \frac{C(d)}{{\rm Vol}(B(x,r)){\rm dist}(x,\partial K)}
\int_{\partial B(x,r)}
u(z)|(\nu(z), e_j)|dz
\le 
C(d)^2\frac{u(x)}{{\rm dist}(x,\partial K)^2}
\end{align*}
The inequality for the third derivative can be  proved analogously. The inequality for the gradient immediately follows from the inequality
for the first derivative. 
\end{proof}

For every cone $K$ one has the bound
$$
{\rm dist}(x,\partial K)\le |x|,\quad x\in K.
$$
Furthermore, it follows from \eqref{u.from.m} that
$$
u(x)\le C|x|^p,\quad x\in K. 
$$
In the next lemma we derive more accurate estimates for $u(x)$.
\begin{lemma}
\label{lem:u-prop}
Assume that either the cone $K$ is convex or $\Sigma$ is $C^2$ and $K$ is starlike. Then
\begin{equation}
\label{u-bounds}
C_1\left({\rm dist}(x,\partial K)\right)^p
\le u(x)\le 
C_2|x|^{p-1}{\rm dist}(x,\partial K),\quad x\in K
\end{equation}
and
\begin{equation}
\label{u'-bound}
|\nabla u(x)|\le C_3|x|^{p-1},\quad x\in K.
\end{equation}
\end{lemma}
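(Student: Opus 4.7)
The plan is to exploit the explicit representation $u(x) = |x|^p m_1(x/|x|)$ from \eqref{u.from.m} and reduce everything to a two-sided boundary estimate for the spherical eigenfunction: I will show that there exist constants $c_1, c_2 > 0$ such that
\[
c_1\,{\rm dist}(\xi, \partial K) \le m_1(\xi) \le c_2\,{\rm dist}(\xi, \partial K), \qquad \xi \in \Sigma.
\]
When $\Sigma$ is $C^2$ this is routine: elliptic regularity up to the boundary for \eqref{eq.eigen} yields $m_1 \in C^1(\overline{\Sigma})$, which together with $m_1 = 0$ on $\partial \Sigma$ gives the upper bound, while the interior sphere condition and the Hopf boundary point lemma provide the matching lower bound. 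When $K$ is convex, $\Sigma$ is geodesically convex on $\mathbb{S}^{d-1}$ and hence Lipschitz with uniform interior and exterior cone conditions; the same two-sided estimate then follows from a standard barrier argument for the linear elliptic equation \eqref{eq.eigen} satisfied by $m_1$, or equivalently from concavity properties of the principal eigenfunction on convex domains.

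Granting the display above, the bounds in \eqref{u-bounds} are an immediate consequence of the scale invariance ${\rm dist}(x, \partial K) = |x| \cdot {\rm dist}(x/|x|, \partial K)$. This at once gives
\[
u(x) = |x|^p m_1\bigl(x/|x|\bigr) \le c_2|x|^{p-1}{\rm dist}(x, \partial K),
\]
and analogously $u(x) \ge c_1|x|^{p-1}{\rm dist}(x, \partial K)$. To recast the latter inequality in the form stated in \eqref{u-bounds}, I use that the geometric hypotheses force $p \ge 1$: a convex cone is contained in a half-space, so $\lambda_1 \ge d - 1$ and hence $p \ge 1$; the strict starlike condition ${\rm dist}(x_0+K, \partial K) > 0$ similarly embeds $K$ into a half-space. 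Combined with ${\rm dist}(x, \partial K) \le |x|$ (which holds since $0 \in \partial K$), this yields
\[
{\rm dist}(x, \partial K)^p \le {\rm dist}(x, \partial K) \cdot |x|^{p-1},
\]
and hence $u(x) \ge C_1\,{\rm dist}(x, \partial K)^p$. The gradient bound \eqref{u'-bound} is then an immediate corollary of the upper bound in \eqref{u-bounds} and the first inequality of Lemma~\ref{lem:harnack}:
\[
|\nabla u(x)| \le C\,\frac{u(x)}{{\rm dist}(x, \partial K)} \le C \cdot C_2\,|x|^{p-1}.
\]

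The main obstacle is establishing the sharp two-sided boundary behavior of $m_1$ in the convex but non-$C^2$ setting, where the classical interior sphere condition fails. One must then construct explicit barriers exploiting the uniform interior and exterior cone conditions produced by convexity of $\Sigma$; this is where the rougher geometric hypothesis costs technical effort. The verification that the starlike hypothesis implies $p \ge 1$ (by embedding $K$ into a half-space using the direction $x_0$ and the strictness ${\rm dist}(x_0+K, \partial K) > 0$) is a secondary but routine point.
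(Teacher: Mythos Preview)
Your approach --- reducing everything to two-sided boundary estimates for $m_1$ on $\Sigma$ --- is quite different from the paper's, which simply cites Varopoulos \cite{Var99} for the upper bound in \eqref{u-bounds} and Lemma~19 of \cite{DW15} for the lower bound, and then deduces \eqref{u'-bound} from Lemma~\ref{lem:harnack} exactly as you do.

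There is, however, a concrete gap. Your assertion that the starlike hypothesis forces $p\ge 1$ is false. Take $d=3$ and let $K$ be the open complement of a thin closed circular cone $C$ of half-angle $\alpha$ about the positive $z$-axis. With $x_0=-e_3$ one checks directly that $e_3+C\subset C$ and ${\rm dist}(e_3+\partial C,\partial C)=\sin\alpha>0$, so $K$ is starlike in the paper's sense; moreover $\Sigma$ (the complement of a small spherical cap) is $C^\infty$. But as $\alpha\to 0$ the first Dirichlet eigenvalue $\lambda_1(\Sigma)$ tends to $0$, whence $p=\sqrt{\lambda_1+1/4}-1/2<1$ for $\alpha$ small. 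That the paper genuinely means to cover $p<1$ is clear from the separate treatment of this regime throughout (e.g.\ \eqref{diff-bound2} and the proofs of Lemmas~\ref{lem:W1}--\ref{lem:W3}).

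Once $p<1$ is allowed, your passage from $u(x)\ge c_1|x|^{p-1}{\rm dist}(x,\partial K)$ to $u(x)\ge C_1{\rm dist}(x,\partial K)^p$ breaks down: raising ${\rm dist}(x,\partial K)\le|x|$ to the negative power $p-1$ reverses the inequality. In fact your own upper bound shows that for $p<1$ the two inequalities in \eqref{u-bounds} are mutually incompatible along sequences with ${\rm dist}(x,\partial K)$ bounded and $|x|\to\infty$, so the obstruction sits in the target statement rather than in your method. The two-sided estimate $c|x|^{p-1}{\rm dist}(x,\partial K)\le u(x)\le C|x|^{p-1}{\rm dist}(x,\partial K)$ that your argument does yield for $C^2$ boundary is what is actually used downstream, for instance in deriving \eqref{Ac.2}--\eqref{Ac.4}.
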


\begin{proof}
The upper bound in \eqref{u-bounds} is (0.2.3) in Varopoulos \cite{Var99}
and the lower bound has been proved in Lemma~19 in \cite{DW15}.
Combining the upper bound in \eqref{u-bounds} with Lemma~\ref{lem:harnack}, we obtain \eqref{u'-bound}.
\end{proof}
We will extend  the function $u$ by putting $u(x)=0$ for $x\notin K$. 
\begin{lemma}
  \label{lem:u-diff}
  Assume that either the cone $K$ is convex or $\Sigma$ is $C^2$ and $K$ is starlike. 
  Let $x\in K$. Then, 
  \begin{equation}
    \label{diff-bound}
    |u(x+y)-u(x)|\le  C|y|\left(|x|^{p-1}+|y|^{p-1}\right)
  \end{equation}
and, for $|y|\le |x|/2$,
\begin{equation}
    \label{diff-bound1}
    |u(x+y)-u(x)|\le  C|y||x|^{p-1}.
  \end{equation}
For $p<1$ and $x\in K$, 
\begin{equation}
  \label{diff-bound2}
  |u(x+y)-u(x)|\le  C|y|^{p}. 
\end{equation}
 
\end{lemma}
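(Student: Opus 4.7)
The plan is to establish \eqref{diff-bound1} first and then deduce \eqref{diff-bound} and \eqref{diff-bound2} as direct consequences, combining it with the crude bound $u(z)\le C|z|^p$ (which follows from \eqref{u-bounds}).

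For \eqref{diff-bound1}, set $d:=\mathrm{dist}(x,\partial K)$ and split according to the size of $|y|$ relative to $d$. If $|y|\le d/2$, the segment $[x,x+y]$ is contained in the ball $B(x,d/2)\subset K$, so $u$ is smooth along it, and the mean value theorem together with \eqref{u'-bound} gives
$$
|u(x+y)-u(x)|\le|y|\sup_{0\le t\le 1}|\nabla u(x+ty)|\le C|y||x|^{p-1},
$$
since $|x+ty|\asymp|x|$ on the segment (using $|y|\le d/2\le|x|/2$). If instead $|y|>d/2$, I estimate $u(x)$ and $u(x+y)$ separately. By \eqref{u-bounds}, $u(x)\le C|x|^{p-1}d\le 2C|x|^{p-1}|y|$. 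If $x+y\notin K$ then $u(x+y)=0$ by our extension; otherwise the Lipschitz property of $\mathrm{dist}(\cdot,\partial K)$ yields $\mathrm{dist}(x+y,\partial K)\le d+|y|\le 3|y|$, and the inequality $|x+y|\le(3/2)|x|$ combined with \eqref{u-bounds} gives $u(x+y)\le C|x+y|^{p-1}\cdot 3|y|\le C'|x|^{p-1}|y|$.

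To derive \eqref{diff-bound}, split on $|y|$ versus $|x|$. When $|y|\le|x|/2$, \eqref{diff-bound1} delivers the stronger bound $C|y||x|^{p-1}$. When $|y|>|x|/2$, I have $|x|\le 2|y|$ and $|x+y|\le 3|y|$, so the crude bound yields $|u(x+y)-u(x)|\le u(x+y)+u(x)\le C|y|^p=C|y|\cdot|y|^{p-1}$. For \eqref{diff-bound2}, under $p<1$ the same split works: the case $|y|>|x|/2$ is identical, and for $|y|\le|x|/2$ the non-positivity of $p-1$ gives $|x|^{p-1}\le(2|y|)^{p-1}\le C|y|^{p-1}$, so \eqref{diff-bound1} becomes $|u(x+y)-u(x)|\le C|y|^p$.

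The one step that requires real care is the MVT argument in the first case of \eqref{diff-bound1}: since $K$ is only assumed starlike (not convex), a segment joining two points of $K$ can in general leave $K$, and the gradient estimate of Lemma~\ref{lem:u-prop} cannot be applied naively; the restriction $|y|\le d/2$ is precisely what forces the segment to remain in the interior of $K$. Everything else is routine bookkeeping with the boundary-decay estimate \eqref{u-bounds}.
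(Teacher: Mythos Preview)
Your proof is correct, and it takes a genuinely different route from the paper's.

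The paper treats the cases $p\ge1$ and $p<1$ separately and argues along the segment $[x,x+y]$ using the gradient bound \eqref{u'-bound}; when the segment leaves $K$ it finds exit/entry times $t_1,t_2$ with $u(x+t_1y)=u(x+t_2y)=0$ and integrates $\nabla u$ over the sub-segments that stay inside $K$. From this it obtains \eqref{diff-bound} (for $p\ge1$) and \eqref{diff-bound2} (for $p<1$) first, and then reads off \eqref{diff-bound1}. Your argument instead proves \eqref{diff-bound1} first and uniformly in $p$, by splitting on $|y|$ versus $d=\mathrm{dist}(x,\partial K)$: when $|y|\le d/2$ the segment lies in $B(x,d/2)\subset K$ and the mean-value estimate is immediate, while for $|y|>d/2$ you bound $u(x)$ and $u(x+y)$ separately using the sharp upper bound $u(z)\le C|z|^{p-1}\mathrm{dist}(z,\partial K)$ from \eqref{u-bounds}. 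The remaining inequalities then drop out by comparing $|y|$ with $|x|/2$ and invoking the crude bound $u(z)\le C|z|^p$.

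Your approach is cleaner in that it avoids the segment-splitting case analysis and does not need to separate $p\ge1$ from $p<1$ until the very last step. The trade-off is that it exploits the boundary-decay estimate in \eqref{u-bounds} directly rather than only through its consequence \eqref{u'-bound}; the paper's argument, by contrast, would still go through with only the gradient bound and the crude estimate $u\le C|z|^p$ in hand.
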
  

\begin{proof}
Consider first the case $p\ge 1$.     
To prove \eqref{diff-bound} consider first the case when the interval $[x,x+y]$ lies in $K$. 
Then, 
$$
|u(x+y)-u(x)|=\left|\int_0^1(\nabla u(x+ty),y)dt\right|\le |y|\int_0^1|\nabla u(x+ty)|dt.
$$
Hence,  by \eqref{u'-bound},
$$
|u(x+y)-u(x)|\le C_3|y|\int_0^1|x+ty|^{p-1}dt\le C2^{p-1}|y|\big(|x|^{p-1}+|y|^{p-1}\big),
$$
as required. 
Now if $[x,x+y]$ does not belong to $K$ then 
we have two cases:  $x+y\in K$ or $x+y\notin K$. 
If $x+y\in K$ then 
there exist $t_1,t_2:0<t_1<t_2<1$ such that 
$[x,x+t_1y)\subset K$ and $(x+t_2y,x+y]\subset K$ and $x+t_1y,x+t_2y\in \partial K$. 
If $x+y\notin K$ then 
there exists $t_1: 0<t_1$ such that 
$[x,x+t_1y)\subset K$ and we put $t_2=1$. 
Since in both cases $x+t_1y,x+t_2y\notin K$ and $u=0$ outside  of $K$ we obtain 
\begin{align*}
|u(x)-u(x+y)|&=|u(x)-u(x+t_1y)+u(x+t_2y)-u(x+y)|\\
&\le |u(x)-u(x+t_1y)|+|u(x+t_2y)-u(x+y)|\\
&=\left|\int_0^{t_1}(\nabla u(x+ty),y)dt\right|
+\left|\int_{t_2}^1(\nabla u(x+ty),y)dt\right|\\
\mbox{(by \eqref{u'-bound})}&\le C_4|y|\left(\int_0^{t_1}+\int_{t_2}^1\right) 
|x+ty|^{p-1}dt\le C|y|\big(|x|^{p-1}+|y|^{p-1}\big),
\end{align*}
as required. If $p\ge1$ then \eqref{diff-bound1} is immediate from
\eqref{diff-bound}. 

For $p<1$ we will prove a stronger statement \eqref{diff-bound2} which clearly implies
\eqref{diff-bound}. Consider first again the case when the interval $[x,x+y]$ lies in $K$. 
If $|x|\ge 2|y|$ then 
\begin{align*}
  |u(x+y)-u(x)|&\le |y|\int_0^1|\nabla u(x+ty)|dt\\
      &\le C |y|\int_0^1|x+ty|^{p-1} dt 
      \le C|y| (2|y|-|y|)^{p-1}
      \le C |y|^p
\end{align*}
and
\begin{align*}
  |u(x+y)-u(x)|&\le |y|\int_0^1|\nabla u(x+ty)|dt\\
      &\le C |y|\int_0^1|x+ty|^{p-1} dt 
      \le C|y| (|x|-|x|/2)^{p-1}
      \le C |y||x|^{p-1}.
\end{align*}
Therefore, we have \eqref{diff-bound1} and \eqref{diff-bound2} for
$|y|\le |x|/2$. Furthermore, for $|x|< 2|y|$ one has 
\begin{align*}
    |u(x+y)-u(x)|&\le 
    C(|x+y|^p+|y|^p) \le C(3^p+1)|y|^p,
\end{align*}
which completes the proof \eqref{diff-bound2} in the case when $[x,x+y]\subset K$.
The case when $[x,x+y]$ does not belong to $K$ can be considered in the same way as for $p\ge1$. 
\end{proof}

For $x\in K$ let 
\begin{equation}\label{eq:defn.f}
f(x) = \e[u(x+X)] -u(x).
\end{equation}
Next we require a bound on $f(x)$.
\begin{lemma}
  \label{lem:bound.f}
  Let the assumptions of Theorem~\ref{thm:C2} hold and $f$ be
  defined by \eqref{eq:defn.f}.   Then, for some $\delta>0$,
  \begin{equation*}
    |f(x)| \le C \frac{|x|^{p}}{{\rm dist}(x,\partial K)^{2+\delta}}\quad\text{for all } x\in K\text{ with }|x|\geq 1.
  \end{equation*}
Furthermore,
$$
|f(x)|\leq C \quad\text{for all } x\in K\text{ with }|x|\leq 1.
$$
\end{lemma}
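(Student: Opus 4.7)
The plan is to Taylor-expand $u(x+X)$ to second order around $x$ and exploit two cancellations: $\mathbf{E}[X]=0$ kills the linear term in expectation, and $\Delta u(x)=0$ combined with $\mathrm{cov}(X_i,X_j)=\delta_{ij}$ kills the quadratic term (since $\sum_i u_{x_ix_i}(x)=\Delta u(x)=0$). Set $D:=\mathrm{dist}(x,\partial K)$ and split the expectation at $\{|X|\le D/2\}$; on this event $[x,x+X]\subset K$, so $u$ is smooth along the whole segment. Moving the two vanishing terms inside the indicators gives
\begin{align*}
f(x) = \mathbf{E}[R(x,X);\,|X|\le D/2] + \mathbf{E}[G(x,X);\,|X|>D/2],
\end{align*}
where $R$ is the third-order Taylor remainder and $G(x,X):=u(x+X)-u(x)-(\nabla u(x),X)-\tfrac12\sum_{ij}X_iX_j u_{x_ix_j}(x)$. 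Fix $\delta:=\min(\alpha-2,1)>0$ and set $\alpha':=2+\delta\in(2,3]$, so that $\mathbf{E}|X|^{\alpha'}<\infty$ under either moment hypothesis.

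For the small-$X$ piece, the integral form of Taylor's remainder combined with Lemma~\ref{lem:harnack} (third-derivative bound) and $u(\xi)\le C|\xi|^p$ yields $|R(x,X)|\le C|x|^p|X|^3/D^3$ whenever $|X|\le D/2$, using $\mathrm{dist}(\xi,\partial K)\ge D/2$ and $|\xi|\le 3|x|/2$ along the segment. A truncated Markov inequality then gives $\mathbf{E}[|X|^3;\,|X|\le D/2]\le CD^{3-\alpha'}$, so this contribution is $\le C|x|^p/D^{\alpha'}$. For the large-$X$ piece I would bound $G$ term by term: the linear correction via $|\nabla u(x)|\le C|x|^{p-1}$ (Lemma~\ref{lem:u-prop}) and $\mathbf{E}[|X|;\,|X|>D/2]\le CD^{1-\alpha'}$; the quadratic correction via $|u_{x_ix_j}(x)|\le C|x|^p/D^2$ (Lemma~\ref{lem:harnack}) and $\mathbf{E}[|X|^2;\,|X|>D/2]\le CD^{2-\alpha'}$; and the raw difference $u(x+X)-u(x)$ by further splitting at $|X|=|x|/2$, using \eqref{diff-bound1} (or \eqref{diff-bound2} when $p<1$) on $\{D/2<|X|\le|x|/2\}$ and the pointwise bound $u(y)\le C|y|^p$ together with $|x|\le 2|X|$ on $\{|X|>|x|/2\}$. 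Using $D\le|x|$ and $|x|\ge 1$ freely, every contribution fits under $C|x|^p/D^{\alpha'}=C|x|^p/D^{2+\delta}$. The case $|x|\le 1$ is immediate from $u(x)\le C$ and $\mathbf{E}[u(x+X)]\le C\mathbf{E}(1+|X|)^p\le C$, since $\mathbf{E}|X|^p<\infty$ under either moment assumption.

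The main obstacle is the borderline regime $\alpha=p>2$, in which the $p$-th moment tail $\mathbf{E}[|X|^p;\,|X|>R]$ is finite but has no quantitative rate in $R$; Markov at level $\alpha'=3<p$ then fails to gain anything from $|x|$ in the subregion $\{|X|>|x|/2\}$. This is rescued by the observation that on $\{|x|\ge 1\}$ the target bound $|x|^p/D^{2+\delta}$ is already bounded below by a positive constant (since $\delta\le p-2$ forces $|x|^{p-2-\delta}\ge 1$, and $D\le|x|$), so the merely finite but unquantified tail can still be absorbed.
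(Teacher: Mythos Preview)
Your approach is correct and essentially identical to the paper's: the same second-order Taylor expansion, the same split at a threshold proportional to $\mathrm{dist}(x,\partial K)$, and the same two cancellations ($\mathbf{E}X=0$ for the gradient term, $\Delta u=0$ with $\mathrm{cov}(X_i,X_j)=\delta_{ij}$ for the Hessian term). The only cosmetic difference is that for the raw difference $u(x+X)-u(x)$ on $\{|X|>D/2\}$ the paper skips your secondary split at $|X|=|x|/2$ and uses the cruder bound $|u(x+y)-u(x)|\le C(|x|^p+|y|^p)$ directly, handling $|x|^p\mathbf{P}(|X|>\eta g(x))$ and $\mathbf{E}[|X|^p;|X|>\eta g(x)]$ separately; your observation that the target bound is bounded below by a constant when $p>2$ is exactly what the paper uses implicitly to absorb the unquantified tail in that borderline case.
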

\begin{proof}
  Let $x\in K$ be such that $|x|\geq 1$.  
  Put $g(x)={\rm dist}(x,\partial K),$ 
  and let $\eta \in (0,1)$. 
Then, for any $y\in B(0,\eta g(x))$, the interval $[x,x+y]\subset K$.
By the Taylor theorem,
  \begin{align*}
    \left|u(x+y)-u(x)-\nabla u \cdot y
      -\frac{1}{2}\sum_{i,j}u_{x_ix_j}y_iy_j\right| \le R_3(x)|y|^3.
  \end{align*}
  The remainder $R_3(x)$ can
  be estimated by Lemma~\ref{lem:harnack},
$$
R_3(x)=C_d\max_{z\in B(x,\eta g(x))}\max_{i,j,k}|u_{x_ix_jx_k}(z)|
\le C\frac{(1+\eta)^{p}}{(1-\eta)^3}\frac{|x|^{p}}{g(x)^3},
$$
which will give us
\begin{equation}
  \label{eq:taylor}
  \left|u(x+y)-u(x)-\nabla u \cdot y -\frac{1}{2}\sum_{i, j}u_{x_ix_j} y_iy_j\right|
  \le C\frac{|x|^{p}}{g(x)^{3}}|y|^3.
\end{equation}
Then  we can proceed as follows
\begin{align*}
  |f(x)|
  &=|\e \left(u(x+X)-u(x)\right) {\bf 1} (|X| \le \eta g(x)) | \\
  &\hspace{1cm}+|\e \left(u(x+X)-u(x)\right) {\bf 1} (|X|>\eta g(x)) | \\
  &\le\left| \e \left[\left(\nabla u\cdot X +\frac{1}{2}\sum_{i,j}u_{x_ix_j}X_iX_j\right){\bf 1}(|X|\le \eta g(x))\right]\right|\\
  &\hspace{1cm}+C\frac{|x|^{p}}{g(x)^{3}}\e \left[|X|^3{\bf 1}(|X|\le \eta g(x))\right]
  \\
  &\hspace{1cm} +C\e\left[(|x|^p+|X|^p){\bf 1}(|X|>\eta g(x))\right].
\end{align*}
Here we used also the bounds $|u(x+y)-u(x)|\leq C(|x+y|^p+|x|^p)\leq C(|x|^p+|y|^p)$ 
valid for all $x$ and $y$ .
After rearranging the terms we obtain 
\begin{align*}
  |f(x)| &\le
  \left |\e \left[\nabla u\cdot X +\frac{1}{2}\sum_{i,j}u_{x_ix_j}X_iX_j\right]\right| \\
  &\hspace{1cm}+\left|\e\left[\left(\nabla u\cdot X +\frac{1}{2}\sum_{i,j}u_{x_ix_j}X_iX_j\right){\bf 1}(|X|>\eta g(x))\right]\right|\\
  &\hspace{1cm}+C\frac{|x|^{p}}{g(x)^{3}} \e \left[|X|^3{\bf 1}(|X|\le \eta g(x))\right]\\
  &\hspace{1cm}+C\e\left[(|x|^p+|X|^p){\bf 1}(|X|>\eta g(x))\right].
\end{align*}
Now note that the first term is $0$ due to $\mathbf EX_i=0$, $\mbox{cov}(X_i,
X_j)=\delta_{ij}$ and $\Delta u=0$.  The partial derivatives of the function $u$
in the second term can be  estimated via Lemma~\ref{lem:harnack}, which results in the following estimate 
\begin{align*}
  |f(x)|&\le C\biggl(\frac{|x|^{p}}{g(x)} \e \left[|X|;|X|>\eta
    g(x)\right]
  +\frac{|x|^{p}}{g(x)^{2}} \e \left[|X|^2;|X|>\eta g(x)\right]\\
  &\hspace{1cm}+\frac{|x|^{p}}{g(x)^{3}} \e \left[|X|^3;|X|\le \eta
    g(x)\right]
  +|x|^{p}\mathbf P(|X|>\eta g(x))\\
  &\hspace{1cm}+\mathbf E \left[|X|^p;|X|> \eta g(x)\right]
  \biggr).
\end{align*}
Hence, from the Markov inequality we conclude
\begin{align}
\label{new1}
\nonumber
  |f(x)|&\le
  C\frac{|x|^{p}}{\eta^2 g^2(x)}\mathbf E \left[|X|^2;|X|>\eta g(x)\right]
  +C\frac{|x|^{p}}{ g^3(x)}\mathbf E \left[|X|^3;|X|\le \eta g(x)\right]\\
&\hspace{1cm}+C\mathbf E \left[|X|^p;|X|> \eta g(x)\right].
\end{align}
Now recall the moment assumption that $\mathbf
E|X|^{2+\delta}<\infty$ for some $\delta>0$.  The first term is
estimated via the Chebyshev inequality,
$$
\frac{|x|^{p}}{\eta^2 g^2(x)}\mathbf E
\left[|X|^2;|X|>\eta g(x)\right]\leq
\frac{|x|^{p}}{\eta^{2+\delta} g^{2+\delta}(x)}\mathbf E
|X|^{2+\delta}.
$$
The second term can be estimated similarly,
\begin{equation*}
  \frac{|x|^{p}}{g^3(x)}\mathbf E \left[|X|^3;|X|\le \eta g(x)\right]\leq
  \frac{|x|^{p}}{\eta^2 g^3(x)}\eta^{1-\delta}g^{1-\delta}(x)\mathbf E |X|^{2+\delta}.
\end{equation*}
 In order to bound the last term
in (\ref{new1}) we have to distinguish between $p\leq2$ and $p>2$.

If $p\leq2$, then, by the Chebyshev inequality,
$$
\mathbf{E}\left[|X|^p;|X|>\eta g(x)\right]\leq
\frac{1}{(\eta g(x))^{2+\delta-p}}\mathbf{E}\left[|X|^{2+\delta}\right]
\leq C\frac{|x|^{p}}{g^{2+\delta}(x)},
$$
as $g(x) = {\rm dist}(x,\partial K)\le |x|.$

In case $p>2$ we have, according to our moment condition, $\mathbf{E}[|X|^p]<\infty$.
Consequently,
$$
\mathbf{E}\left[|X|^p;|X|>\eta g(x)\right]\leq C.
$$
The second statement follows easily from the fact that $u(x)$ is bounded on $|x|\leq1$ and
the inequality $\mathbf{E}[u(x+X)]\leq C(1+\mathbf{E}[|X|^p])$.
\end{proof}
We derive next an estimate for the maximum 
$$
M(n):=\max_{k\le n}|S(k)|,
$$
which will be used several times in the proofs of our main results.
\begin{lemma}
\label{lem:tail}
If $\mathbf{E}|X|^t<\infty$ for some $t\ge2$ then, uniformly in $x$, as $n\to\infty$,
$$
\mathbf{E}\left[M^t(n);\tau_x>n,M(n)>n^{1/2+\varepsilon/2}\right]=o\left(\mathbf{E}[\tau_x\wedge n]\right).
$$
\end{lemma}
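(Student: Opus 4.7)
The plan is to set $N:=n^{1/2+\varepsilon/2}$ and introduce the first-passage time $T:=\inf\{k\ge 1:|S(k)|>N\}$, so that $\{M(n)>N\}=\{T\le n\}$. I would partition the expectation by the value of $T$ and combine the pathwise bound $M(n)\le M(k)+\widetilde M_k(n-k)$ on $\{T=k\}$, where $\widetilde M_k(m):=\max_{1\le j\le m}|S(k+j)-S(k)|$, with the observation that $M(k)=|S(k)|\le N+|X(k)|$ on $\{T=k\}$. This yields
$$
M^t(n)\le C_t\bigl(N^t+|X(k)|^t+\widetilde M_k(n-k)^t\bigr)\quad\text{on }\{T=k\},
$$
and splits $\mathbf{E}[M^t(n);\tau_x>n,M(n)>N]$ into three sums over $k=1,\dots,n$.

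For the $N^t$-sum, which totals $N^t\mathbf{P}(M(n)>N,\tau_x>n)$, I would combine the Rosenthal/Doob bound $\mathbf{P}(M(n)>N)\le Cn^{t/2}/N^t$ with the strong Markov property at $T$, rewriting the conditioning as $\mathbf{P}_{x+S(T)}(\tau>n-T)$, and then apply the preliminary exit-time estimate from Lemma~\ref{moments} to extract a small factor. For the $|X(k)|^t$-sum I would split according to $|X(k)|\le N/2$ versus $|X(k)|>N/2$: by independence of $X(k)$ from $\mathcal{F}_{k-1}$ and the identity $\sum_k\mathbf{P}(\tau_x>k-1)=\mathbf{E}[\tau_x\wedge n]$, the large-jump contribution is at most $\mathbf{E}[|X|^t;|X|>N/2]\,\mathbf{E}[\tau_x\wedge n]=o(\mathbf{E}[\tau_x\wedge n])$ since $N\to\infty$, while $|X(k)|\le N/2$ on $\{T=k\}$ forces $|S(k-1)|>N/2$, a standard large deviation. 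For the $\widetilde M_k(n-k)^t$-sum, conditioning on $\mathcal{F}_k$ and using the independence of the future walk together with the unconditional bound $\mathbf{E}\widetilde M_k(m)^t\le Cm^{t/2}$ (Doob plus Rosenthal, which is where $\mathbf{E}|X|^t<\infty$ enters) reduces the problem to bounding $\sum_k(n-k)^{t/2}\mathbf{P}(T=k,\tau_x>k)$, which is again controlled via Lemma~\ref{moments}.

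I expect the main obstacle to be uniformity in $x$, particularly for $x$ close to $\partial K$ where $\mathbf{E}[\tau_x\wedge n]$ is small. The crude estimate $N^t\mathbf{P}(M(n)>N)$ is already of order $n^{t/2}$, which dwarfs $\mathbf{E}[\tau_x\wedge n]$ in that regime, so the conditioning $\{\tau_x>n\}$ must be retained throughout and the corresponding probabilistic gain extracted. The McConnell-type moment bound of Lemma~\ref{moments} is precisely what supplies that gain and makes the whole scheme close.
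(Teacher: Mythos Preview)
Your big-jump piece is right and matches the paper's main term: the contribution from $\{|X(k)|>N/2\}$ gives $\mathbf{E}[|X|^t;|X|>N/2]\,\mathbf{E}[\tau_x\wedge n]=o(\mathbf{E}[\tau_x\wedge n])$. But the remaining terms in your scheme do not close, and Lemma~\ref{moments} cannot rescue them. Take the $\widetilde M_k(n-k)^t$-sum (the $N^t$-sum is analogous). Conditioning on $\mathcal F_k$ and using $\mathbf{E}\widetilde M_k(m)^t\le Cm^{t/2}$ gives at best $Cn^{t/2}\mathbf{P}(M(n)>N)$; the Doob/Rosenthal bound $\mathbf{P}(M(n)>N)\le Cn^{t/2}/N^t$ is exactly borderline, producing $Cn^{t/2-t\varepsilon/2}\to\infty$, while $\mathbf{E}[\tau_x\wedge n]$ can be as small as $1$. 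Invoking Lemma~\ref{moments} after the strong Markov step does not help: it yields $\mathbf{P}_y(\tau>m)\le C(1+|y|^\beta)m^{-\beta/2}$ with $y=x+S(T)$, and on $\{T\le n\}$ one has $|S(T)|>N$, so $|y|^\beta$ is at least of order $N^\beta$, cancelling any gain from $m^{-\beta/2}$. There is no uniform-in-$x$ small factor hiding here; a moment-$t$ tail bound on $M(n)$ is simply one power too weak.

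The paper's proof bypasses this by a different decomposition that avoids both Rosenthal/Doob and Lemma~\ref{moments} entirely (cf.\ Remark~\ref{without_mc}). One splits on the maximal \emph{increment}: $\mathbf{P}(M(n)>r,\tau_x>n)\le\mathbf{P}(M(n)>r,\max_j|X(j)|\le ar)+\mathbf{P}(\max_j|X(j)|>ar,\tau_x>n)$. For the second probability one uses $\{\tau_x>n\}\subset\{\tau_x>j-1\}$ and independence of $X(j)$ from $\mathcal F_{j-1}$ to get $\mathbf{E}[\tau_x\wedge n]\mathbf{P}(|X|>ar)$; integration in $r$ then produces $\mathbf{E}[\tau_x\wedge n]\,\mathbf{E}[|X|^t;|X|>aN]$. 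For the first probability the key tool is a Fuk--Nagaev inequality (Corollary~23 in \cite{DW15}), giving $\mathbf{P}(M(n)>r,\max_j|X(j)|\le ar)\le C_an(n/r^2)^{1/(a\sqrt d)}$, i.e.\ polynomial decay of \emph{arbitrary} order in $r^2/n$ by choosing $a$ small. After integrating $r^{t-1}$ against this bound one obtains an absolute $O(n^{-3/2})$, which is $o(\mathbf{E}[\tau_x\wedge n])$ uniformly since $\tau_x\ge 1$. The Fuk--Nagaev step is the ingredient your plan is missing; without it the ``no-big-jump'' part cannot be made small enough.
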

\begin{proof}
For every fixed $a>0$ one has
\begin{align}
\label{nu.3}
\nonumber
&\mathbf{P}\left(M(n)>r,\tau_x>n\right)\\
&\hspace{1cm}\le
\mathbf{P}\left(M(n)>r,\max_{j\le n}|X(j)|\le ar\right)+
\mathbf{P}\left(\max_{j\le n}|X(j)|>ar,\tau_x>n\right).
\end{align}
Using first the standard union bound and then the Fuk-Nagaev-type inequality from Corollary 23 in
\cite{DW15}, one gets
\begin{align}
\label{nu.4}
\mathbf{P}\left(M(n)>r,\max_{j\le n}|X(j)|\le ar\right)
\le 2dn \left(\frac{\sqrt{d}e}{a}\right)^{1/a}\left(\frac{n}{r^2}\right)^{1/(a\sqrt d)}.
\end{align}
Furthermore,
\begin{align}
\label{nu.5}
\nonumber
\mathbf{P}\left(\max_{j\le n}|X(j)|>ar,\tau_x>n\right)
&\le\sum_{j=1}^n\mathbf{P}\left(|X(j)|>ar,\tau_x>n\right)\\
\nonumber
&\le\sum_{j=1}^n\mathbf{P}\left(|X(j)|>ar,\tau_x>j-1\right)\\
&=\mathbf{E}[\tau_x\wedge n]\mathbf{P}(|X|>ar).
\end{align}
Combining \eqref{nu.3}--\eqref{nu.5}, we conclude that
\begin{align*}
\mathbf{P}\left(M(n)>r,\tau_x>n\right)\le
2dn \left(\frac{\sqrt{d}e}{a}\right)^{1/a}\left(\frac{n}{r^2}\right)^{1/(a\sqrt d)}
+\mathbf{E}[\tau_x\wedge n]\mathbf{P}(|X|>ar).
\end{align*}
Choosing here $a=\frac{2\varepsilon}{\sqrt d ((1+\varepsilon)t+5)}$  
and integrating the latter bound, one easily gets the bound
\begin{align}
  &\mathbf{E}\left[(M(n))^{t};\tau_x>n, M(n)> n^{1/2+\varepsilon/2}\right]
  \nonumber\\
  &\hspace{1cm}\le C(a)\left(n^{-3/2}+\mathbf{E}[\tau_x\wedge n]\mathbf{E}\left[|X|^t;|X|>an^{1/2+\varepsilon/2}\right]\right)
    \label{eq.max.nag}.
\end{align}
Thus, the proof is complete. 
\end{proof}

Finally, we will require  the following results from \cite{MC84}.   
\begin{lemma}
  \label{moments}
  For every $\beta<p$ we have
  \begin{equation}
    \label{MC}
    \mathbf{E}[\tau_x^{\beta/2}]\leq C(1+|x|^\beta)
  \end{equation}
  and
  \begin{equation}
    \label{max-mom}
    \mathbf{E}[M^\beta(\tau_x)]\leq C(1+|x|^\beta),
  \end{equation}
  where $M(\tau_x):=\max_{k\leq\tau_x}|x+S(k)|$.
\end{lemma}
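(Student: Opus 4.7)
The plan is to reduce both moment bounds to the single tail estimate
$$
\mathbf{P}(\tau_x>n) \le C(1+|x|^\beta)\,n^{-p/2}, \qquad n\ge 1,\ x\in K,
$$
valid for every $\beta<p$, and then integrate. Given this tail bound, the first inequality is immediate from $\mathbf{E}[\tau_x^{\beta/2}] = \frac{\beta}{2}\int_0^\infty s^{\beta/2-1}\mathbf{P}(\tau_x>s)\,ds$, which is convergent exactly when $\beta<p$. For the second, write $M^\beta(\tau_x)\le 2^{\beta-1}\bigl(|x|^\beta + \max_{k\le \tau_x}|S(k)|^\beta\bigr)$ and decompose
$$
\mathbf{E}\bigl[\max_{k\le \tau_x}|S(k)|^\beta\bigr] = \sum_{m\ge 1}\mathbf{E}\bigl[\max_{k\le m}|S(k)|^\beta;\tau_x = m\bigr];
$$
an Abel-type summation together with Doob's $L^\beta$-maximal inequality and the Fuk--Nagaev bound underlying Lemma~\ref{lem:tail} reduces this to the same tail bound.

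To establish the tail estimate, I would run an approximate-martingale argument against the Brownian harmonic function $u$. Lemma~\ref{lem:u-prop} gives $u(y)\asymp |y|^{p-1}g(y)$, where $g(y):=\mathrm{dist}(y,\partial K)$, while Lemma~\ref{lem:bound.f} gives $|\mathbf{E}[u(y+X)]-u(y)|\le C|y|^{p}g(y)^{-2-\delta}$ with some $\delta>0$. Fix a small $\eta>0$ and introduce the stopping time
$$
\sigma := \inf\bigl\{k\ge 0 : g(x+S(k))\le n^{1/2-\eta}\ \text{ or }\ |x+S(k)|\ge n^{1/2+\eta}\bigr\}.
$$
On the event $\{k<\tau_x\wedge\sigma\}$ the one-step error in $u$ is bounded by $C\,n^{p/2-\delta/2+O(\eta)}$, so the cumulative error over $n$ steps is $o(n^{p/2})$ once $\eta$ is small enough relative to $\delta$. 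Optional stopping at $\tau_x\wedge\sigma\wedge n$, together with the lower bound $u(y)\ge C g(y)^{p}\ge C n^{p(1/2-\eta)}$ valid on $\{\sigma>n\}$, yields $\mathbf{P}(\tau_x\wedge\sigma>n)\le C(1+|x|^p)n^{-p/2+O(\eta)}$.

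The main obstacle is the residual event $\{\sigma\le n<\tau_x\}$. The easier half, when $|x+S(\sigma)|\ge n^{1/2+\eta}$, is controlled by the moment hypothesis together with Lemma~\ref{lem:tail}. The genuinely delicate half is when the walk enters the boundary layer $\{g\le n^{1/2-\eta}\}$: here $u$ is small and the bound in Lemma~\ref{lem:bound.f} blows up, so the direct martingale argument breaks down. I would treat this by the strong Markov property at $\sigma$ and a bootstrap: rescale by $n^{1/2-\eta}$ so that the cone's geometry is preserved, and iterate the preceding bound on geometrically decreasing scales. Logarithmically many iterations absorb the $O(\eta)$ loss into the slack $p-\beta$. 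For Brownian motion this scheme is essentially McConnell's original argument, and Lemma~\ref{lem:bound.f} plays the role of the PDE identity $\Delta u=0$ in transferring it to the random walk setting.
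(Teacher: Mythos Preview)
The paper does not prove this lemma: its entire ``proof'' reads ``This is the statement of Theorem~3.1 of~\cite{MC84}. One has only to notice that $e(\Gamma,R)$ in that theorem is denoted by $p$ in our paper.'' You are therefore attempting far more than the paper does---an independent derivation of McConnell's theorem via the apparatus of Lemmas~\ref{lem:harnack}--\ref{lem:tail}.

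Your strategy (optional stopping with $u$, then a scale-by-scale bootstrap near the boundary) is indeed the shape of McConnell's argument, so the outline is reasonable. But two points are not right.

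First, the tail bound you write cannot hold with $|x|^\beta$, $\beta<p$, in front: if $x$ sits deep in $K$ with $|x|=R$, then $\mathbf{P}(\tau_x>R^2/4)$ is bounded away from zero, whereas your bound gives $C(1+R^\beta)R^{-p}\to 0$. The correct inequality is $\mathbf{P}(\tau_x>n)\le C\min\bigl(1,(1+|x|^p)n^{-p/2}\bigr)$; splitting the integral for $\mathbf{E}[\tau_x^{\beta/2}]$ at $n\asymp|x|^2$ then recovers the factor $|x|^\beta$ in \eqref{MC}. This part is easily repaired.

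Second, the ``genuinely delicate'' boundary-layer step is precisely the substance of McConnell's proof and cannot be compressed to one sentence. Moreover, your appeal to Lemma~\ref{lem:tail} for the overshoot event is circular: that lemma's conclusion is $o(\mathbf{E}[\tau_x\wedge n])$, and a nontrivial bound on $\mathbf{E}[\tau_x\wedge n]$ is exactly what the tail estimate you seek would provide. Replacing it by the crude $\mathbf{E}[\tau_x\wedge n]\le n$ costs a full power of $n$, far more than the $n^{O(\eta)}$ slack your bootstrap can absorb. The paper itself acknowledges in Remark~\ref{without_mc} that even for its \emph{applications}, bypassing \cite{MC84} requires two extra moments on $X$; reproving the lemma itself under the stated hypotheses is harder still.

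In short: the paper simply cites \cite{MC84}; your sketch points in the right direction but leaves the main difficulty open.
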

This is the statement of Theorem 3.1 of \cite{MC84}. One has only to notice that
$e(\Gamma,R)$ in that theorem is denoted by $p$ in our paper.

\section{First proof of Theorem~\ref{thm:C2}}
Since $K$ is starlike there exists  $x_0\in K$ with $|x_0|=1$, $x_0+K\subset K$ and $R_0$
such that ${\rm dist}(R_0x_0+K,\partial K)>1$.
For $k\ge 0$ set 
$$
g_k= k^{1/2-\gamma}R_0x_0,
$$
where $\gamma\in(0,\min(1/2,p))$. 
First we will show that  it is sufficient to show convergence of 
$$
\e[u(x+g_k+S(k));\tau_x>k] 
$$
as $k$ to infinity. 

\begin{lemma}
  For any $x\in K$, as $k\to\infty$,
\begin{equation}
  \label{eq.harmonic-app}
  \e[u(x+g_k+S(k));\tau_x>k] - \e[u(x+S(k));\tau_x>k]\to 0.
\end{equation}
\end{lemma}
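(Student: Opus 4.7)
The idea is that $|g_k|=R_0 k^{1/2-\gamma}$ is of strictly smaller order than the diffusive scale $k^{1/2}$ of $S(k)$, so adding $g_k$ should perturb $u(x+S(k))$ only by a lower-order amount. I plan to quantify this using the regularity estimates of Lemma~\ref{lem:u-diff}, together with the moment bound of Lemma~\ref{moments} and, when necessary, the maximum-tail estimate of Lemma~\ref{lem:tail}.

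For $p<1$ the argument is essentially deterministic: \eqref{diff-bound2} supplies the pointwise bound $|u(x+g_k+S(k))-u(x+S(k))|\le C|g_k|^p=CR_0^p k^{p(1/2-\gamma)}$, and multiplying by $\mathbf P(\tau_x>k)\le C(1+|x|^\beta)k^{-\beta/2}$ from Lemma~\ref{moments} (for $\beta<p$ close to $p$) yields a product of order $k^{(p-\beta)/2-p\gamma}$, which tends to $0$ for any $\gamma>0$.

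For $p\ge 1$ the bound \eqref{diff-bound} contains the random factor $|x+S(k)|^{p-1}$, and I would split $\{\tau_x>k\}$ using the threshold $M(k)\le k^{1/2+\varepsilon/2}$ for a small $\varepsilon>0$. On the typical event $|x+S(k)|\le |x|+k^{1/2+\varepsilon/2}$, so \eqref{diff-bound} produces the deterministic bound $Ck^{1/2-\gamma}(|x|^{p-1}+k^{(p-1)(1/2+\varepsilon/2)})$; multiplying by $\mathbf P(\tau_x>k)\le Ck^{-\beta/2}$ for $\beta<p$ close to $p$ leaves an exponent $p/2-\gamma-\beta/2+(p-1)\varepsilon/2$, which is negative for $\varepsilon$ small enough. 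On the atypical event I would use the cruder bound $|u(x+g_k+S(k))-u(x+S(k))|\le C(|x|^p+|g_k|^p+M^p(k))$: the deterministic part times $\mathbf P(\tau_x>k)$ clearly vanishes, while $\mathbf E[M^p(k);\tau_x>k,M(k)>k^{1/2+\varepsilon/2}]$ is handled via Lemma~\ref{lem:tail}, directly with $t=p$ if $p>2$, or with $t=\alpha>2$ combined with H\"older's inequality (exponent $\alpha/p$) if $1\le p\le 2$.

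The delicate step is this last atypical-event estimate in the borderline regime $1\le p\le 2$, where $\mathbf E[\tau_x\wedge k]$ is not bounded and only grows at most like $k^{1-\beta/2}$ for $\beta<p$. The H\"older step leaves $(\mathbf E[\tau_x\wedge k])^{p/\alpha}\cdot\mathbf P(\tau_x>k)^{(\alpha-p)/\alpha}$; a short computation reduces the net exponent to $p/\alpha-\beta/2$, which is strictly negative because $\alpha>2$ and $\beta$ can be taken arbitrarily close to $p$. The restriction $\gamma\in(0,\min(1/2,p))$ built into the definition of $g_k$ supplies exactly the slack needed to keep every exponent on the right side of zero throughout the argument.
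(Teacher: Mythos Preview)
Your proposal is correct and follows essentially the same route as the paper: both arguments apply the Lipschitz estimates of Lemma~\ref{lem:u-diff}, control $\mathbf P(\tau_x>k)$ via Lemma~\ref{moments}, split at the threshold $k^{(1+\varepsilon)/2}$, and handle the large-deviation part through Lemma~\ref{lem:tail}. The only cosmetic difference is that for $1\le p\le 2$ the paper retains the factor $|g_k|$ and uses the Markov-type bound $|S(k)|^{p-1}\le k^{-(1+\varepsilon)/2}|S(k)|^{p}$ on the large set before invoking Lemma~\ref{lem:tail} with $t=2+\delta$, whereas you drop to the cruder bound $u\le C|\cdot|^p$ and recover via a H\"older step; both work.
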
  

\begin{proof}
 Consider first the case $p\ge 1$.    
 Using \eqref{diff-bound}, we obtain
\begin{align}
\label{new.1}
\nonumber
&\big|\e[u(x+g_k+S(k));\tau_x>k] - \e[u(x+S(k));\tau_x>k]\big|\\ 
\nonumber
&\hspace{1cm}=\left|\e[u(x+g_k+S(k))-u(x+S(k));\tau_x>k] \right|\\ 
\nonumber
&\hspace{1cm}\le C|g_k| \e[|x+S(k)|^{p-1};\tau_x>k] +C |g_k|^p\pr(\tau_x>k)\\
&\hspace{1cm}\le C|g_k| \e[|S(k)|^{p-1};\tau_x>k] +C(1+|x|^{p-1}) |g_k|^p\pr(\tau_x>k).
\end{align}  
Using the Markov inequality and \eqref{MC} with $\beta=p-p\gamma$, we get
\begin{equation}
\label{new.2}
|g_k|^p\pr(\tau_x>k) \le C k^{p/2-p\gamma} \frac{\e[\tau_x^{p/2-p\gamma/2}]}{k^{p/2-p\gamma/2}} \to 0, \quad k\to \infty. 
\end{equation}
Furthermore,
\begin{align}
\label{new.2a}
\nonumber
&\e[|S(k)|^{p-1};\tau_x>k]\\
\nonumber
&\hspace{1cm}\le k^{(1+\varepsilon)(p-1)/2}\mathbf{P}(\tau_x>k)
+\e[|S(k)|^{p-1};\tau_x>k, |S(k)|>k^{(1+\varepsilon)/2}]\\
&\hspace{1cm}\le k^{(1+\varepsilon)(p-1)/2}\mathbf{P}(\tau_x>k)
+k^{-(1+\varepsilon)/2}\e[|S(k)|^{p};\tau_x>k, |S(k)|>k^{(1+\varepsilon)/2}].
\end{align}
Choosing $\varepsilon<\gamma/(p-1)$, 
applying the Markov inequality  and using \eqref{MC} with $\beta=p-\varepsilon(p-1)$,
we conclude that
\begin{equation}
\label{new.3}
|g_k|k^{(1+\varepsilon)(p-1)/2}\mathbf{P}(\tau_x>k)
\le  
|g_k|k^{(1+\varepsilon)(p-1)/2} \frac{\e[\tau_x^{p/2-\varepsilon(p-1)/2}]}
{k^{p/2-\varepsilon(p-1)/2}}
\to 0.
\end{equation}
If $p>2$ then $\mathbf{E}\tau_x$ is finite and, by Lemma~\ref{lem:tail},
\begin{equation}
\label{new.4}
|g_k|k^{-(1+\varepsilon)/2}\e[|S(k)|^{p};\tau_x>k, |S(k)|>k^{(1+\varepsilon)/2}]\to0.
\end{equation}
If $p\le2$ then, using \eqref{MC} once again, we have
$$
\mathbf{E}[\tau_x\wedge k]\le k^{1-p/2+\delta/2}\mathbf{E}[\tau_x^{p/2-\delta/2}]
\le C(1+|x|^p)k^{1-p/2+\delta/2}.
$$
Combining this estimate with Lemma~\ref{lem:tail}, we obtain
\begin{align*}
&\mathbf{E}\left[|S(k)|^p;\tau_x>k,|S(k)|>k^{(1+\varepsilon)/2}\right]\\
&\hspace{1cm}
\le k^{-(2+\delta-p)(1+\varepsilon)/2}\mathbf{E}\left[|S(k)|^{2+\delta};\tau_x>k,|S(k)|>k^{(1+\varepsilon)/2}\right]
\to 0.
\end{align*}
Therefore, \eqref{new.4} remains valid for $p\le2$.
Combining \eqref{new.3} and \eqref{new.4}, we conclude that
$$
|g_k|\e[|S(k)|^{p-1};\tau_x>k]\to0.
$$
Applying this and \eqref{new.2} to the right hand side in \eqref{new.1}, we have
\eqref{eq.harmonic-app}.

We are  left to  consider the case $p<1$. By \eqref{diff-bound2}, 
we immediately arrive at   
\begin{align*}
  \left|\e[u(x+g_k+S(k))-u(x+S(k));\tau_x>k ]\right| 
  &\le C |g_k|^p\mathbf{P}(\tau_x>k) \\ 
  &\le C|g_k|^p \frac{\e[\tau_x^{p/2-p\gamma/2}]}{k^{p/2-p\gamma/2}}
  \to 0. 
\end{align*}   
 
\end{proof}
Now we prove the existence of the limit of the sequence $\mathbf{E}[u(x+g_k+S(k));\tau_x>k]$. 
\begin{proposition}
\label{prop:main}
There exist a finite  function $V(x)$ such that 
$$
\lim_{k\to\infty} \e[u(x+g_k+S(k));\tau_x>k] = V(x). 
$$
\end{proposition}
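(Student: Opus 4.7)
My strategy is to establish that the sequence $a_k := \mathbf{E}[u(x+g_k+S(k));\tau_x>k]$ is Cauchy, which simultaneously yields finiteness and existence of the limit $V(x)$. Set $W_k = x+g_k+S(k)$ and $\tilde W_k = x+g_{k+1}+S(k) = W_k+(g_{k+1}-g_k)$. Splitting $\mathbf{1}_{\tau_x>k} = \mathbf{1}_{\tau_x>k+1}+\mathbf{1}_{\tau_x=k+1}$, writing $W_{k+1} = \tilde W_k + X(k+1)$, conditioning on $\mathcal F_k$, and using $\mathbf{E}[u(\tilde W_k+X)\mid\mathcal F_k] = u(\tilde W_k)+f(\tilde W_k)$ from \eqref{eq:defn.f}, I arrive at the telescoping identity
\begin{equation*}
a_{k+1}-a_k = \mathbf{E}[u(\tilde W_k)-u(W_k);\tau_x>k] + \mathbf{E}[f(\tilde W_k);\tau_x>k] - \mathbf{E}[u(W_{k+1});\tau_x=k+1],
\end{equation*}
and it suffices to show each term is absolutely summable in $k$.

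For the middle term, the scaling property of the cone together with $\mathrm{dist}(R_0 x_0+K,\partial K)>1$ gives $\mathrm{dist}(\tilde W_k,\partial K) \ge k^{1/2-\gamma}$ on $\{\tau_x>k\}$, so Lemma~\ref{lem:bound.f} yields $|f(\tilde W_k)| \le C k^{-(1/2-\gamma)(2+\delta)}|\tilde W_k|^p$. Controlling $\mathbf{E}[|\tilde W_k|^p;\tau_x>k]$ by splitting according to $|S(k)|\le k^{1/2+\varepsilon/2}$ and invoking Lemma~\ref{moments} together with Lemma~\ref{lem:tail} produces a bound of order $k^{-(1/2-\gamma)(2+\delta)+o(1)}$, summable once $\gamma$ is chosen small relative to $\delta$. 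The first term uses $|g_{k+1}-g_k|\asymp k^{-1/2-\gamma}$ and the Lipschitz-type estimate from Lemma~\ref{lem:u-diff}: since $|W_k|\ge\mathrm{dist}(W_k,\partial K)\ge k^{1/2-\gamma}\gg|g_{k+1}-g_k|$ on $\{\tau_x>k\}$, I may apply $|u(\tilde W_k)-u(W_k)|\le C|g_{k+1}-g_k|\,|W_k|^{p-1}$ (valid also for $p<1$ as noted in the proof of \eqref{diff-bound2}), and conclude summability by the same moment inputs. The boundary term is bounded via $u(W_{k+1})\le C|W_{k+1}|^p$, the pointwise estimate $|W_{k+1}|^p\le C(|x|^p+k^{(1/2-\gamma)p}+|S(k+1)|^p)$, and Abel summation against $\sum_k k\,\mathbf{P}(\tau_x=k+1)<\infty$, which follows from $\mathbf{E}[\tau_x^{\beta/2}]<\infty$ for any $\beta<p$, provided $\gamma$ is chosen with $p(1-2\gamma)<\beta<p$.

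The main obstacle is the compatibility of the exponents $\gamma$, $\delta$, and the auxiliary $\varepsilon$ in Lemma~\ref{lem:tail}: $\gamma$ must be strictly positive so that $g_k$ pushes $\tilde W_k$ polynomially far from $\partial K$, yet small enough that $(1/2-\gamma)(2+\delta)$ dominates the polynomial blow-up of the moment estimates for $|\tilde W_k|^p$. The argument naturally bifurcates at $p=2$, mirroring the split in Lemma~\ref{lem:bound.f}: for $p>2$ one exploits $\mathbf{E}\tau_x<\infty$ directly via Lemma~\ref{moments}, while for $p\le 2$ one uses $\mathbf{E}|X|^{2+\delta}<\infty$ together with the bound $\mathbf{E}[\tau_x\wedge k]\le Ck^{1-p/2+\eta}$. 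The time-dependent shift $g_k$ (reminiscent of the curved-boundary approach of \cite{DSW16}) is precisely what decouples $f(\tilde W_k)$ from the boundary and makes the Cauchy estimate work without any extendability assumption on $m_1$.
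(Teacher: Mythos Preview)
Your telescoping identity is correct and, once summed, reproduces exactly the paper's three-term decomposition \eqref{decomp_main}: your boundary term is the paper's $W^{(1)}$, your shift term is $W^{(2)}$, and your $f$-term is $W^{(3)}$. The treatment of the shift and $f$-terms is essentially the paper's Lemmas~\ref{lem:W2} and~\ref{lem:W3}, so there is nothing new there.

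The gap is in the boundary term. Your crude bound $u(W_{k+1})\le C|W_{k+1}|^{p}$ throws away the one piece of geometric information the event $\{\tau_x=k+1\}$ carries: since $x+S(k+1)\notin K$ on that event, one has $\mathrm{dist}(W_{k+1},\partial K)\le |g_{k+1}|\le C(k+1)^{1/2-\gamma}$, and the upper bound in \eqref{u-bounds} then gives $u(W_{k+1})\le C|g_{k+1}|\,|W_{k+1}|^{p-1}$ for $p\ge1$ (respectively $u(W_{k+1})\le C|g_{k+1}|^{p}$ for $p<1$). This is precisely what the paper exploits in Lemma~\ref{lem:W1}, and it is what drops the required moment from $p$ to something strictly below $p$, so that Lemma~\ref{moments} applies. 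Without it you are left with $\sum_{k}\mathbf{E}[|S(k+1)|^{p};\tau_x=k+1]$, which is essentially $\mathbf{E}[M^{p}(\tau_x)]$; Lemma~\ref{moments} only controls $\mathbf{E}[M^{\beta}(\tau_x)]$ for $\beta<p$, so this sum is not known to be finite. Your ``Abel summation against $\sum_{k}k\,\mathbf{P}(\tau_x=k+1)<\infty$'' cannot rescue this: that sum equals $\mathbf{E}[\tau_x]-1$, which is only finite when $p>2$, and in any case $|S(k+1)|^{p}$ is not independent of $\{\tau_x=k+1\}$, so the pointwise bound $\mathbf{E}[|S(k+1)|^{p};\tau_x=k+1]\le Ck^{p/2}\mathbf{P}(\tau_x=k+1)$ is unjustified. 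Replace the crude bound by the distance-based one and the rest of your argument goes through along the lines of Lemma~\ref{lem:W1}.
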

We shall split the proof of this proposition into several steps.
To this end we shall use the following decomposition:
\begin{align}
\label{decomp_main}
\nonumber
&u(x+g_k+S(k))\mathbb{I}\{\tau_x>k\}\\
\nonumber
&\hspace{0.5cm}=u(x)+\sum_{l=1}^k\left[u(x+g_l+S(l))\mathbb{I}\{\tau_x>l\}
-u(x+g_{l-1}+S(l-1))\mathbb{I}\{\tau_x>l-1\}\right]\\
\nonumber
&\hspace{0.5cm}=u(x)-\sum_{l=1}^k u(x+g_l+S(l))\mathbb{I}\{\tau_x=l\}\\
\nonumber 
&\hspace{1.5cm}+\sum_{l=1}^k\left[u(x+g_l+S(l))-u(x+g_{l-1}+S(l-1))\right]\mathbb{I}\{\tau_x>l-1\}\\
\nonumber
&\hspace{0.5cm}=u(x)-u(x+g_{\tau_x}+S(\tau_x))\mathbb{I}\{\tau_x\le k\}\\
\nonumber 
&\hspace{1.5cm}+\sum_{l=1}^k\left[u(x+g_l+S(l))-u(x+g_{l-1}+S(l))\right]\mathbb{I}\{\tau_x>l-1\}\\
\nonumber
&\hspace{1.5cm}+\sum_{l=1}^k\left[u(x+g_{l-1}+S(l))-u(x+g_{l-1}+S(l-1))\right]\mathbb{I}\{\tau_x>l-1\}\\
&\hspace{0.5cm}=:u(x)-W_k^{(1)}(x)+W_k^{(2)}(x)+W_k^{(3)}(x).
\end{align}
The proposition will follow if we show that the expectations of all three random
variables in \eqref{decomp_main} converge, as $k\to\infty$, to finite limits.
\begin{lemma}
\label{lem:W1}
The sequence $W_k^{(1)}(x)$ converges almost surely and in $L^1$
towards $u(x+g_{\tau_x}+S(\tau_x))$. Furthermore,
\begin{equation}
\label{W1}
\mathbf{E}u(x+g_{\tau_x}+S(\tau_x))\le C(1+|x|^{p-\gamma}).
\end{equation}
\end{lemma}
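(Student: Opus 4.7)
The plan is to reduce the lemma to two observations: the almost-sure finiteness of $\tau_x$, which is immediate from Lemma~\ref{moments}, and the pathwise monotonicity of the sequence $W_k^{(1)}(x)$ in $k$. Since $\tau_x<\infty$ almost surely, the indicator $\mathbb I\{\tau_x\le k\}$ increases to $1$, so $W_k^{(1)}(x)\to u(x+g_{\tau_x}+S(\tau_x))$ pointwise. Because $u\ge 0$, this sequence is also non-decreasing in $k$, and monotone convergence then upgrades pointwise convergence to $L^1$-convergence as soon as the moment bound \eqref{W1} is in place.

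For \eqref{W1}, the key observation is that $x+S(\tau_x)\notin K$ by the definition of $\tau_x$, so $u(x+S(\tau_x))=0$ by our convention that $u$ is extended by $0$ off $K$. Writing $z:=x+g_{\tau_x}+S(\tau_x)$, we therefore have
$$
u(z)=u(z)-u(x+S(\tau_x)),
$$
so that $u(z)$ is an increment of $u$ caused purely by the deterministic shift $g_{\tau_x}$. If $z\notin K$ there is nothing to estimate; otherwise Lemma~\ref{lem:u-diff} applies with base point $z\in K$ and increment $-g_{\tau_x}$. For $p\ge 1$, \eqref{diff-bound} together with $|z|\le|g_{\tau_x}|+M(\tau_x)$ yields
$$
u(z)\le C\bigl(\tau_x^{1/2-\gamma}M(\tau_x)^{p-1}+\tau_x^{p(1-2\gamma)/2}\bigr),
$$
while for $p<1$ the cruder estimate \eqref{diff-bound2} directly gives $u(z)\le C\tau_x^{p(1-2\gamma)/2}$.

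Taking expectations, the pure $\tau_x$ summand is controlled by \eqref{MC} with the admissible exponent $\beta=p(1-2\gamma)<p$, yielding a polynomial bound in $|x|$. In the $p\ge 1$ case the mixed summand is handled by H\"older's inequality with a conjugate pair $(s,s')$ chosen in the non-empty window $p<s<p/(1-2\gamma)$ (which exists precisely because $\gamma>0$): the product
$$
\bigl(\mathbf{E}\tau_x^{s(1-2\gamma)/2}\bigr)^{1/s}\bigl(\mathbf{E}M(\tau_x)^{(p-1)s'}\bigr)^{1/s'}
$$
has both moment indices $s(1-2\gamma)$ and $(p-1)s'$ strictly below the critical value $p$, so \eqref{MC} and \eqref{max-mom} each produce polynomial bounds in $|x|$ of degree $<p$, and \eqref{W1} follows. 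The main obstacle is exactly the need to keep both moment indices simultaneously below the critical $p$; this is possible only because the slow shift $|g_k|\asymp k^{1/2-\gamma}$ leaves a $\gamma$-sized buffer on the $\tau_x$-side, which in turn buys the room needed for the H\"older split.
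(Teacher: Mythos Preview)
Your proof is correct and follows essentially the same route as the paper: both exploit that $x+S(\tau_x)\notin K$ to bound $u(x+g_{\tau_x}+S(\tau_x))$ by a quantity of order $|g_{\tau_x}|$ times $|z|^{p-1}$ (for $p\ge1$), then split the mixed moment $\mathbf{E}[\tau_x^{1/2-\gamma}M(\tau_x)^{p-1}]$ via H\"older with conjugate exponents chosen so that both resulting indices lie below the critical value $p$, invoking \eqref{MC} and \eqref{max-mom}. The only cosmetic difference is that the paper obtains the pointwise bound from the upper estimate in \eqref{u-bounds} together with ${\rm dist}(z,\partial K)\le|g_{\tau_x}|$, whereas you reach the same inequality through \eqref{diff-bound} applied to the increment $-g_{\tau_x}$; the paper also isolates an $|x|^{p-1}\mathbf{E}|g_{\tau_x}|$ term explicitly, while you absorb it into $M(\tau_x)^{p-1}$.
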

\begin{proof}
The almost sure convergence is immediate from the fact that the sequence
$W_k^{(1)}$ is increasing. Thus, it remains to show that \eqref{W1} holds.

Since $x+S(\tau_x)\notin K$,
${\rm dist}(x+g_{\tau_x}+S(\tau_x),\partial K)\le |g_{\tau_x}|$
in the case when $x+g_{\tau_x}+S(\tau_x)\in K$.

Assume first that $p<1$. Combining \eqref{diff-bound2} and \eqref{MC},
we obtain
$$
\e[u(x+g_{\tau_x}+S(\tau_x))] \le 
C\e[|g_{\tau_x}|^p]
\le C \e[\tau_x^{p/2-p\gamma}]
\le  C(1+|x|^{p-\gamma}).
$$

Consider now the case $p\ge1$.
Then, using the upper bound \eqref{u-bounds}, we obtain  
\begin{align*}
\e[u(x+g_{\tau_x}+S(\tau_x))]
&\le C\e[|g_{\tau_x}|(|x|+|g_{\tau_x}|+|S(\tau_x)|))^{p-1}]\\
&\le C
\left(
  |x|^{p-1}\e|g_{\tau_x}|+\e|g_{\tau_x}|^p+  \e[|g_{\tau_x}|M(\tau_x)^{p-1}]
\right).
\end{align*}  
Recalling the definition of the sequence $g_k$, we have
$$
\e[u(x+g_{\tau_x}+S(\tau_x))]
\le C|x|^{p-1}\e\tau_x^{1/2-\gamma}
+C\e\tau_x^{p/2-p\gamma}+
C\e\left[\tau_x^{1/2-\gamma}M(\tau_x)^{p-1}\right].
$$
Using \eqref{MC}, we conclude that the first two summands are bounded from above by $C(1+|x|^{p-2\gamma})$.
Applying the H\"older inequality with some $p'\in(p,p+p\gamma)$ to the third summand, we get 
$$
\e\left[\tau_x^{1/2-\gamma}M(\tau_x)^{p-1}\right]
\le\left(\e \tau_x^{p'/2-p'\gamma}\right)^{1/p'}\left(\e M^{(p-1)p'/(p'-1)}(\tau_x)\right)^{(p'-1)/p'}
$$
By \eqref{max-mom}, $\e M^\beta(\tau_x)\le C_\beta(1+|x|^\beta)$, $\beta<p$. 
From this inequality and from \eqref{MC}, we infer that
$$
\e\left[\tau_x^{1/2-\gamma}M(\tau_x)^{p-1}\right]
\le C(1+|x|^{1-\gamma})(1+|x|^{p-1}).
$$
As a result, \eqref{W1} holds also for $p\ge1$.
\end{proof}
\begin{lemma}
\label{lem:W2}
There exists $W^{(2)}(x)$ such that $W^{(2)}_k(x)$ converges a.s. and in $L^1$
towards $W^{(2)}(x)$. Moreover,
\begin{equation}
\label{W2}
\mathbf{E}[W^{(2)}(x)]\le C(1+|x|^{p-\gamma}).
\end{equation}
\end{lemma}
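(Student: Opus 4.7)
My plan is to prove absolute summability in $L^1$, which will yield both a.s.\ and $L^1$ convergence simultaneously. Writing $W_k^{(2)}(x)=\sum_{l=1}^k Y_l(x)$ with
$$
Y_l(x):=\bigl[u(x+g_l+S(l))-u(x+g_{l-1}+S(l))\bigr]\mathbb{I}\{\tau_x>l-1\},
$$
the goal is to establish $\sum_{l=1}^\infty\mathbf{E}|Y_l(x)|\le C(1+|x|^{p-\gamma})$. Once this holds, the partial sums form a Cauchy sequence in $L^1$ (so a limit $W^{(2)}(x)$ exists in $L^1$); Fubini--Tonelli forces $\sum_l|Y_l(x)|<\infty$ almost surely, giving absolute a.s.\ convergence; and the bound \eqref{W2} follows from $\mathbf{E}|W^{(2)}(x)|\le\sum_l\mathbf{E}|Y_l(x)|$.

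For the pointwise bound I set $h_l:=g_l-g_{l-1}$ and $z_l:=x+g_{l-1}+S(l)$; since $g_l=l^{1/2-\gamma}R_0 x_0$, the mean value theorem gives $|h_l|\le C l^{-1/2-\gamma}$ for $l\ge 2$. Applying Lemma~\ref{lem:u-diff}: for $p<1$ the bound \eqref{diff-bound2} gives $|Y_l(x)|\le C|h_l|^p\mathbb{I}\{\tau_x>l-1\}$; for $p\ge 1$ the bound \eqref{diff-bound} gives $|Y_l(x)|\le C|h_l|\bigl(|z_l|^{p-1}+|h_l|^{p-1}\bigr)\mathbb{I}\{\tau_x>l-1\}$, and on $\{\tau_x>l-1\}$ I bound $|z_l|\le |x|+|g_{l-1}|+M(\tau_x)+|X(l)|$ (using $|x+S(l-1)|\le M(\tau_x)$ on this event together with $|S(l)|\le |S(l-1)|+|X(l)|$), so that $|z_l|^{p-1}\le C\bigl(|x|^{p-1}+|g_{l-1}|^{p-1}+M(\tau_x)^{p-1}+|X(l)|^{p-1}\bigr)$ for $p\ge 1$. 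Taking expectations, the factor $\mathbf{P}(\tau_x>l-1)\le C(1+|x|^\beta)l^{-\beta/2}$ for any $\beta<p$ obtained from Markov and \eqref{MC} combines with $|h_l|^p\sim l^{-p(1/2+\gamma)}$ to make the pure-$|h_l|^p$ contribution summable for $\beta$ close enough to $p$. The contributions involving $|h_l|\,|g_{l-1}|^{p-1}$ and $|h_l|\,M(\tau_x)^{p-1}$ are handled by H\"older with conjugate exponent $p'\in(p,p+p\gamma)$, exactly as for the third summand in the proof of Lemma~\ref{lem:W1}, combining \eqref{MC} and \eqref{max-mom} to produce the $|x|^{p-\gamma}$ growth; the $|X(l)|^{p-1}$ contribution uses only the moment assumption on $X$ together with $\sum_l|h_l|\,\mathbf{P}(\tau_x>l-1)<\infty$.

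The main obstacle is calibrating all these exponents so that the total bound is $C(1+|x|^{p-\gamma})$ rather than a cruder power of $|x|$, just as in Lemma~\ref{lem:W1}; in particular the H\"older split with $p'\in(p,p+p\gamma)$ applied to the $M(\tau_x)^{p-1}$ term is the key ingredient. A secondary subtlety arises when $p$ is very small: the crude bound $|Y_l(x)|\le C|h_l|^p$ may not produce a summable series by itself, since $p(1+\gamma)$ need not exceed $1$ when $\gamma<p$ is tiny. In that regime one should instead use the sharper bound \eqref{diff-bound1} on the event $\{|h_l|\le|z_l|/2\}$, where $|z_l|^{p-1}$ is small because $|z_l|$ is typically of order $\sqrt{l}$; the low-probability event $\{|z_l|\ll\sqrt{l},\tau_x>l-1\}$ is then controlled separately via a moderate-deviation estimate in the spirit of Lemma~\ref{lem:tail}.
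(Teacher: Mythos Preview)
Your overall strategy---proving $\sum_l \mathbf{E}|Y_l(x)|\le C(1+|x|^{p-\gamma})$---is exactly the paper's, and for $p\ge 1$ your argument is a clean variant. The paper decomposes $|z_l|^{p-1}$ as $|x+g_{l-1}|^{p-1}+|S(l-1)|^{p-1}+|X(l)|^{p-1}$ and handles the $|S(l-1)|^{p-1}$ piece by truncation at $l^{(1+\varepsilon)/2}$ together with Lemma~\ref{lem:tail}; you instead bound $|x+S(l-1)|\le M(\tau_x)$ on $\{\tau_x>l-1\}$, swap the sum inside to get $\mathbf{E}[\tau_x^{1/2-\gamma}M(\tau_x)^{p-1}]$, and apply the same H\"older/\eqref{max-mom} combination used in Lemma~\ref{lem:W1}. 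That works and is arguably tidier, since it reuses the Lemma~\ref{lem:W1} machinery and avoids Lemma~\ref{lem:tail} altogether here.

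For $p<1$ there is a real gap. You correctly identify that the crude bound $|Y_l|\le C|h_l|^p$ need not be summable, but the proposed remedy---a ``moderate-deviation estimate in the spirit of Lemma~\ref{lem:tail}'' for the event $\{|z_l|\ll\sqrt{l}\}$---is the wrong tool. That event is \emph{not} rare by central-limit considerations; Lemma~\ref{lem:tail} controls the upper tail of $M(n)$, not the lower tail of $|z_l|$. The point you are missing is geometric, not probabilistic: on $\{\tau_x>l-1\}$ one has $x+S(l-1)\in K$, hence $\mathrm{dist}(x+g_{l-1}+S(l-1),\partial K)\ge (l-1)^{1/2-\gamma}$ by the very definition of the shift $g_{l-1}$, and in particular $|x+g_{l-1}+S(l-1)|\ge (l-1)^{1/2-\gamma}$. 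Therefore $|z_l|\ge (l-1)^{1/2-\gamma}/2$ and $z_l\in K$ whenever $|X(l)|\le (l-1)^{1/2-\gamma}/2$, which lets you apply \eqref{diff-bound1} to obtain $|Y_l|\le C|h_l|(l-1)^{(p-1)(1/2-\gamma)}$ on that event; summing against $\mathbf{P}(\tau_x>l-1)$ gives $C\mathbf{E}\tau_x^{p/2-p\gamma}$. The complementary event $\{|X(l)|>(l-1)^{1/2-\gamma}/2\}$ is handled by the crude \eqref{diff-bound2} bound together with the tail of a \emph{single} increment via $\mathbf{E}|X|^{2+\delta}<\infty$, not by any estimate on the partial sums.
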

\begin{proof}
It is clear that all claims in the lemma will follow from
\begin{align}
\label{sum1.6}
\sum_{l=1}^\infty\e[|u(x+g_l+S(l))-u(x+g_{l-1}+S(l))|;\tau_x>l-1]
\le C(1+|x|^{p-\gamma}).
\end{align}

First we will consider  the case $p\ge 1$. 
Note that  if 
$x+g_{l-1}+S(l)\in K$ 
then, by \eqref{diff-bound}, 
$$
|u(x+g_l+S(l))-u(x+g_{l-1}+S(l))|
\le 
C|g_l-g_{l-1}|^p
+C |g_l-g_{l-1}||x+g_{l-1}+S(l)|^{p-1}
$$
and, similarly, 
if $x+g_{l}+S(l)\in K$ then 
$$
|u(x+g_l+S(l))-u(x+g_{l-1}+S(l))|
\le 
C|g_l-g_{l-1}|^p
+C |g_l-g_{l-1}||x+g_{l}+S(l)|^{p-1}.
$$
Hence, if 
either $x+g_l+S(l)\in K$ or $x+g_{l-1}+S(l)\in K$ then 
\begin{multline}
|u(x+g_l+S(l))-u(x+g_{l-1}+S(l))|\\ 
\le 
C|g_l-g_{l-1}|^p
+C |g_l-g_{l-1}|
(|x+g_{l-1}|^{p-1}
+|S(l-1)|^{p-1}+|X(l)|^{p-1}).
\label{eq.d1}
\end{multline}
Since $u=0$ outside of the cone the inequality \eqref{eq.d1} is obvious if both 
$x+g_l+S(l)\notin K$ and $x+g_{l-1}+S(l-1)\notin K$.
Using \eqref{eq.d1}, we have   
\begin{align}
\label{sum1.1}
\nonumber
 &
  \sum_{l=1}^k
  \left|
  \e[u(x+g_l+S(l))-u(x+g_{l-1}+S(l));\tau_x>l-1]\right|\\
  \nonumber 
  &\hspace{0.5cm}\le      
  C \sum_{l=1}^k \pr(\tau_x>l-1) \left(|g_l-g_{l-1}|^p+|g_l-g_{l-1}||g_l|^{p-1}\right)\\
  &\hspace{1cm}+
  C \sum_{l=1}^k
  |g_l-g_{l-1}|
  \e[|x|^{p-1}+|S(l-1)|^{p-1}+|X(l)|^{p-1};\tau_x>l-1].
 \end{align}  
By \eqref{MC}, 
noting that $|g_l-g_{l-1}| \le Cl^{-1/2-\gamma}$ we obtain 
for every $p\ge  1$,
\begin{align}
\label{sum1.2}
\nonumber
&\sum_{l=1}^k \pr(\tau_x>l-1) \left(|g_l-g_{l-1}|^p+|g_l-g_{l-1}||g_l|^{p-1}\right)\\
\nonumber
&\hspace{1cm}\le C\sum_{l=1}^k l^{p/2-1-p\gamma}\mathbf{P}(\tau_x>l-1)\\
&\hspace{1cm}\le C\mathbf{E}[\tau_x^{p/2-p\gamma}]\le C(1+|x|^{p-\gamma}). 
\end{align}
Similarly,
\begin{align}
\nonumber 
&(|x|^{p-1}+\e[|X(1)|]^{p-1})\sum_{l=1}^k|g_l-g_{l-1}|\pr(\tau_x>l)\\
\nonumber
&\hspace{2cm}\le 
C(1+|x|^{p-1}) \sum_{l=1}^k \mathbf{P}(\tau_x>l)l^{-1/2-\gamma}\\
&\hspace{2cm}\le C(1+|x|^{p-1})\mathbf{E}[\tau_x^{1/2-\gamma}]
\le C(1+|x|^{p-\gamma}).
\label{sum1.3}
\end{align}

It follows from \eqref{new.2a} and from Lemma~\ref{lem:tail} that
$$
\e\left[|S(l-1)|^{p-1};\tau_x>l-1\right]\le l^{(p-1)/2+\varepsilon (p-1)/2}\mathbf{P}(\tau_x>l-1)
+C\mathbf{E}[\tau_x]l^{-1/2-\varepsilon/2}
$$ 
in the case $p>2$. Therefore, for $\varepsilon<\gamma/(p-1)$,
\begin{align*}
&\sum_{l=1}^k|g_l-g_{l-1}|\e\left[|S(l-1)|^{p-1};\tau_x>l-1\right]\\
&\hspace{1cm}\le C\sum_{l=1}^k l^{p/2-1-\gamma/2}\mathbf{P}(\tau_x>l-1)
+C\mathbf{E}[\tau_x]\sum_{l=1}^k l^{-1-\gamma}\\
&\hspace{1cm}\le C\mathbf{E}[\tau_x^{p/2-\gamma/2}]+C\mathbf{E}[\tau_x].
\end{align*}
Then, taking into account \eqref{MC}, 
\begin{equation}
\label{sum1.5}
\sum_{l=1}^k|g_l-g_{l-1}|\e\left[|S(l-1)|^{p-1};\tau_x>l-1\right]\le C(1+|x|^{p-\gamma}).
\end{equation}
Similarly one shows that this relation is true in the case $p\le2$.
(Here one has to use the assumption $\mathbf{E}|X|^{2+\delta}<\infty$ instead of
$\mathbf{E}|X|^p<\infty$.)
Plugging \eqref{sum1.2}--\eqref{sum1.5} into \eqref{sum1.1}, we infer that
\eqref{sum1.6} holds in the case $p\ge1$.

Assume now that $p<1$. On the event $\{\tau_x>l-1\}$ one has 
$$
{\rm dist}(x+g_{l-1}+S(l-1),\partial K)\ge (l-1)^{1/2-\gamma}.
$$
This implies that 
$$
x+g_{l-1}+S(l)\in K\quad\text\quad
|x+g_{l-1}+S(l)|\ge\frac{(l-1)^{1/2-\gamma}}{2}
$$
provided that $\tau_x>l-1$ and $|X(l)|\le \frac{(l-1)^{1/2-\gamma}}{2}$.
From these observations and from \eqref{diff-bound1} we obtain
\begin{align*}
&\mathbf{E}[|u(x+g_{l}+S(l))-u(x+g_{l-1}+S(l))|;
\tau_x>l-1,|X(l)|\le (l-1)^{1/2-\gamma}/2]\\
&\hspace{1cm}\le
C|g_l-g_{l-1}|(l-1)^{(p-1)(1/2-\gamma)}\mathbf{P}(\tau_x>l-1).
\end{align*}
Furthermore, by \eqref{diff-bound2},
\begin{align*}
 &\mathbf{E}[|u(x+g_{l}+S(l))-u(x+g_{l-1}+S(l))|;
\tau_x>l-1,|X(l)|> (l-1)^{1/2-\gamma}/2]\\
&\le C|g_l-g_{l-1}|^p\mathbf{P}(\tau_x>l-1)
\mathbf{P}(|X(l)|>(l-1)^{1/2-\gamma}/2).
\end{align*}
Recalling that $\mathbf{E}|X(1)|^{2+\delta}<\infty$ and that
$|g_l-g_{l-1}|\le C(l-1)^{-1/2-\gamma}$ for $l>2$, one gets
easily
\begin{align*}
&\mathbf{E}[|u(x+g_{l}+S(l))-u(x+g_{l-1}+S(l))|;\tau_x>l-1]\\
&\hspace{1cm}\le C(l-1)^{p/2-1-p\gamma}\mathbf{P}(\tau_x>l-1),\ l\ge2.
\end{align*}
Summing over $l$ and using \eqref{MC} we complete the proof.
\end{proof}
\begin{lemma}
\label{lem:W3}
For every $x\in K$,
\begin{equation}
\label{W3_1}
\mathbf{E}\left[\sum_{l=0}^{\infty}|f(x+g_l+S(l))|\mathbb{I}\{\tau>l\}\right]
\le C\left(1+|x|^{p-\gamma}+\frac{|x|^p}{({\rm dist}(x,\partial K))^\gamma}\right)
\end{equation}
and
\begin{equation}
\label{W3_2}
\mathbf{E}[W^{(3)}_k(x)]\to
\mathbf{E}\left[\sum_{l=0}^{\infty}f(x+g_l+S(l))\mathbb{I}\{\tau>l\}\right].
\end{equation}
\end{lemma}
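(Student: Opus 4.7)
The plan has three parts: first, use conditioning to rewrite $\mathbf{E}[W_k^{(3)}(x)]$ as a sum of one-step means $\mathbf{E}[f(x+g_m+S(m));\tau_x>m]$; second, prove the absolute bound \eqref{W3_1}; third, deduce \eqref{W3_2} by dominated convergence. The conditioning step uses that $X(l)$ is independent of $\mathcal{F}_{l-1}$ and that $\{\tau_x>l-1\}\in\mathcal{F}_{l-1}$, so by the definition \eqref{eq:defn.f},
$$
\mathbf{E}[(u(x+g_{l-1}+S(l))-u(x+g_{l-1}+S(l-1)))\mathbb{I}\{\tau_x>l-1\}]=\mathbf{E}[f(x+g_{l-1}+S(l-1));\tau_x>l-1];
$$
summing over $l=1,\dots,k$ and reindexing $m=l-1$ yields $\mathbf{E}[W_k^{(3)}(x)]=\sum_{m=0}^{k-1}\mathbf{E}[f(x+g_m+S(m));\tau_x>m]$.

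The heart of the argument is \eqref{W3_1}. The crucial geometric input comes from the time-dependent shift $g_l=l^{1/2-\gamma}R_0 x_0$: since $R_0 x_0+K\subset K$ with ${\rm dist}(R_0 x_0+K,\partial K)>1$ and $K$ is a cone, scaling by $l^{1/2-\gamma}$ gives ${\rm dist}(g_l+K,\partial K)\ge c\,l^{1/2-\gamma}$ for every $l\ge 1$. On $\{\tau_x>l\}$ we have $x+S(l)\in K$, hence $x+g_l+S(l)\in g_l+K$ and so ${\rm dist}(x+g_l+S(l),\partial K)\ge c\,l^{1/2-\gamma}$. Combined with Lemma~\ref{lem:bound.f}, this gives for $l\ge 1$
$$
|f(x+g_l+S(l))|\mathbb{I}\{\tau_x>l\}\le\frac{C(|x|^p+l^{p(1/2-\gamma)}+|S(l)|^p)}{l^{(1/2-\gamma)(2+\delta)}}\mathbb{I}\{\tau_x>l\}.
$$
Taking expectations, bounding $\mathbf{E}[|S(l)|^p;\tau_x>l]$ by splitting $|S(l)|\le l^{(1+\varepsilon)/2}$ versus its complement (the latter handled as in Lemma~\ref{lem:tail}), and summing via \eqref{MC} together with $\sum_{l}l^\alpha\mathbf{P}(\tau_x>l)\asymp\mathbf{E}\tau_x^{\alpha+1}$ bounds the contribution of indices $l\ge l^\ast$ by $C(1+|x|^{p-\gamma})$, where $l^\ast\asymp{\rm dist}(x,\partial K)^{1/(1/2-\gamma)}$ is the threshold at which the shift $g_l$ starts to dominate the intrinsic boundary distance.

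For $l\le l^\ast$ the shift $g_l$ is too small to beat ${\rm dist}(x,\partial K)$, so one bounds $|f(x+g_l+S(l))|$ by $C|x|^p/{\rm dist}(x,\partial K)^{2+\delta}$ on the high-probability event that $|S(l)|$ is small enough to keep ${\rm dist}(x+g_l+S(l),\partial K)$ comparable to ${\rm dist}(x,\partial K)$ (the complementary tail is controlled once more through Lemma~\ref{lem:tail}). The resulting contribution is of order $l^\ast\cdot|x|^p/{\rm dist}(x,\partial K)^{2+\delta}\lesssim|x|^p/{\rm dist}(x,\partial K)^\gamma$, accounting for the third summand on the right-hand side of \eqref{W3_1}. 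Once \eqref{W3_1} is in hand, \eqref{W3_2} is immediate by dominated convergence against this integrable majorant. The main obstacle I anticipate is the matching of the two regimes in the middle step: choosing the threshold $l^\ast$ so that the shift-driven decay $l^{-(1/2-\gamma)(2+\delta)}$ and the intrinsic-distance bound glue together into exactly the form of \eqref{W3_1} without spurious factors; the remaining moment sums are close analogues of those already carried out in Lemmas~\ref{lem:W1} and \ref{lem:W2}.
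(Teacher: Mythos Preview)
Your proposal follows the same route as the paper: rewrite $\mathbf{E}[W_k^{(3)}(x)]=\sum_{l=0}^{k-1}\mathbf{E}[f(x+g_l+S(l));\tau_x>l]$, establish \eqref{W3_1} by combining Lemma~\ref{lem:bound.f} with the shift-induced lower bound ${\rm dist}(x+g_l+S(l),\partial K)\ge c\,l^{1/2-\gamma}$ on $\{\tau_x>l\}$, and then deduce \eqref{W3_2} by dominated convergence.

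The one point that needs adjustment is precisely the obstacle you anticipate. The paper first separates the numerator $|x+g_l+S(l)|^p$ into the three pieces $|x|^p$, $|g_l|^p$, $|S(l)|^p$; the latter two are summed over \emph{all} $l\ge 1$ (no threshold at all) using \eqref{MC} and Lemma~\ref{lem:tail}, each contributing $C(1+|x|^{p-\gamma})$. Only the remaining piece
\[
|x|^p\sum_{l}\mathbf{E}\bigl[{\rm dist}(x+g_l+S(l),\partial K)^{-2-\delta};\tau_x>l\bigr]
\]
is split at a threshold, and that threshold is $g(x):={\rm dist}(x,\partial K)$, not your $l^\ast\asymp g(x)^{2/(1-2\gamma)}$. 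With threshold $g(x)$ the small-$l$ analysis works exactly as you sketch: on $\{|S(l)|\le g(x)/2\}$ the distance is at least $g(x)/2$, while on the complement one uses Chebyshev, $\mathbf{P}(|S(l)|>g(x)/2)\le Cl/g(x)^2$, together with the shift bound for the distance. Your larger $l^\ast$ extends into the range $l>g(x)^2$, where $\{|S(l)|\le g(x)/2\}$ is no longer a high-probability event and neither Chebyshev nor Lemma~\ref{lem:tail} (whose cutoff is $l^{1/2+\varepsilon/2}$, not $g(x)$) controls the complement, so the gluing you describe breaks down there. Replacing $l^\ast$ by $g(x)$ repairs this and produces the term $|x|^p/({\rm dist}(x,\partial K))^\gamma$ cleanly; note also that with this organization the $|x|^p/g(x)^\gamma$ contribution comes entirely from this third sum, not from the $l\ge l^\ast$ range as in your accounting.
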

\begin{proof}
Recalling the definition of the function $f$, we have
$$
\mathbf{E}[W^{(3)}_k(x)]=
\e
\left[
\sum_{l=0}^{k-1}f(x+g_l+S(l))\mathbb{I}\{\tau>l\}
\right].
$$ 
This equality yields that \eqref{W3_2} is a simple consequence
of \eqref{W3_1}.

Applying Lemma~\ref{lem:bound.f} and using the elementary bound
${\rm dist}(y+g_l,\partial K)\ge l^{1/2-\gamma}$ for $y\in K$,  
following from the choice of $x_0$ and $R_0$ in the definition
of $g_l$, 
we have 
\begin{align*}
&\sum_{l=0}^{k-1} \e[|f(x+g_{l-1}+S(l-1))|;\tau_x>l-1]\\
&\hspace{1cm}\le C\frac{|x|^p}{({\rm dist}(x,\partial K))^{2+\delta}}
+C \sum_{l=1}^{k-1}
\e\left[\frac{|x+g_{l}+S(l)|^{p}}{{{\rm dist}(x+g_{l}+S(l),\partial K)^{2+\delta}}};\tau_x>l\right]\\
&\hspace{1cm}\le
C|x|^p\sum_{l=0}^{k-1} \mathbf{E}\left[({\rm dist}(x+g_{l}+S(l)),\partial K)^{-2-\delta};\tau_x>l\right]\\
&\hspace{2cm}+
C\sum_{l=1}^{k-1}|g_{l}|^{p-2-\delta}\mathbf{P}(\tau_x>l)\\
&\hspace{2cm}
+C\sum_{l=1}^{k-1}|g_{l}|^{-2-\delta}\mathbf{E}[|S(l)|^p;\tau_x>l].
\end{align*}  
Choosing $\gamma$ sufficiently small, we have
\begin{align}
\label{sum2.1}
\nonumber
\sum_{l=1}^{k-1}|g_{l}|^{p-2-\delta}\mathbf{P}(\tau_x>l)
&\le C\sum_{l=1}^{k-1} l^{p/2-1-\gamma}\mathbf{P}(\tau_x>l)\\
&\le C\mathbf{E}[\tau_x^{p/2-\gamma/2}]\le C(1+|x|^{p-\gamma}).
\end{align}
We next show that 
\begin{equation}
\label{sum2.2}
\sum_{l=1}^{k-1}|g_{l}|^{-2-\delta}\mathbf{E}[|S(l)|^p;\tau_x>l]
\le C(1+|x|^{p-\gamma}).
\end{equation}
Assume first that $p>2$.
Applying Lemma~\ref{lem:tail}, we have
\begin{align*}
\mathbf{E}[|S(l)|^p;\tau_x>l]\le
l^{p(1+\varepsilon)/2}\mathbf{P}(\tau_x>l)+C\mathbf{E}[\tau_x]. 
\end{align*}
Combining this with the estimate $|g_l|\ge cl^{1/2-\gamma}$, we obtain,
for $\gamma<\frac{\delta}{5+2\delta}$ and 
$\varepsilon<\frac{\delta-5\gamma-2\gamma\delta}{p}$,
\begin{align*}
&\sum_{l=1}^{k-1}|g_{l}|^{-2-\delta}\mathbf{E}[|S(l)|^p;\tau_x>l]\\
&\hspace{1cm}\le C\sum_{l=1}^{k-1}
l^{p(1+\varepsilon)/2-(2+\delta)(1/2-\gamma)}\mathbf{P}(\tau_x>l)
+C\mathbf{E}[\tau_x]\sum_{l=1}^k l^{-(2+\delta)(1/2-\gamma)}\\
&\hspace{1cm}\le C\sum_{l=1}^{k-1}
l^{p/2-\gamma/2-1}\mathbf{P}(\tau_x>l)
+C\mathbf{E}[\tau_x]\\
&\hspace{1cm}\le C \mathbf{E}[\tau_x^{p/2-\gamma/2}].
\end{align*}
Taking into account Lemma~\ref{moments}, we get \eqref{sum2.2} for $p>2$.

If $p\le2$ then the moment of order $2+\delta$ is finite and, consequently,
\begin{align*}
&\mathbf{E}[|S(l)|^p;\tau_x>l]\\
&\hspace{1cm}\le l^{p(1+\varepsilon)/2}\mathbf{P}(\tau_x>l)
+l^{-(2+\delta-p)(1+\varepsilon/2)}
\mathbf{E}[|S(l)|^{2+\delta};|S(l)|>l^{(1+\varepsilon)/2},\tau_x>l]\\
&\hspace{1cm}\le l^{p(1+\varepsilon)/2}\mathbf{P}(\tau_x>l)
+Cl^{-(2+\delta-p)(1+\varepsilon/2)}\mathbf{E}[\tau_x\wedge l],
\end{align*}
where in the last step we used Lemma~\ref{lem:tail} once again. Noting that
$$
\mathbf{E}[\tau_x\wedge l]\le 
l^{1-p/2+\gamma/2}\mathbf{E}[\tau_x^{p/2-\gamma/2}]
$$
we obtain
\begin{align*}
&\mathbf{E}[|S(l)|^p;\tau_x>l]\\
&\hspace{1cm}\le l^{p(1+\varepsilon)/2}\mathbf{P}(\tau_x>l)
+Cl^{-(2+\delta-p)(1+\varepsilon/2)+1-p/2+\gamma/2}
\mathbf{E}[\tau_x^{p/2-\gamma/2}].
\end{align*}
Summing over $l$ we infer that \eqref{sum2.2} holds also for $p\le2$,
provided that $\gamma$ and $\varepsilon$ are sufficiently small.

Using the bound
${\rm dist}(y+g_l,\partial K)\ge l^{1/2-\gamma}$ and choosing $\gamma$ sufficiently small,
we conclude that
\begin{align*}
&\sum_{l=g(x)}^\infty \mathbf{E}\left[({\rm dist}(x+g_{l}+S(l)),\partial K)^{-2-\delta};\tau_x>l\right]\\
&\hspace{1cm}\le  \sum_{l=g(x)}^\infty l^{-(1/2-\gamma)(2+\delta)}
\le Cg^{-\gamma}(x).
\end{align*}
Furthermore, if $|S(l)|\le g(x)/2$ then ${\rm dist}(x+g_l+S(l),\partial K)>g(x)/2$. Thus,
by the Chebyshev inequality,
$$
\mathbf{E}\left[({\rm dist}(x+g_{l}+S(l)),\partial K)^{-2-\delta};\tau_x>l\right]
\le C g(x)^{-2-\delta}+Cl^{-(1/2-\gamma)(2+\delta)}\frac{l}{g^2(x)}.
$$
Consequently,
\begin{align*}
\sum_{l=0}^{g(x)}\mathbf{E}\left[({\rm dist}(x+g_{l}+S(l)),\partial K)^{-2-\delta};\tau_x>l\right]
\le Cg^{-\gamma}(x).
\end{align*}
As a result, 
$$
\sum_{l=0}^\infty \mathbf{E}\left[({\rm dist}(x+g_{l}+S(l)),\partial K)^{-2-\delta};\tau_x>l\right]\le Cg^{-\gamma}(x).
$$
Combining this with \eqref{sum2.1} and \eqref{sum2.2},
we arrive at \eqref{W3_1}.
\end{proof}

The claim of Proposition~\ref{prop:main} is immediate from Lemmas~\ref{lem:W1},
\ref{lem:W2} and \ref{lem:W3}. Furthermore, we have, for a sufficiently small
$\gamma$, the estimate
\begin{equation}
\label{V-est}
|V(x)-u(x)|\le C\left(1+|x|^{p-\gamma}
+\frac{|x|^p}{({\rm dist}(x,\partial K))^\gamma}\right).
\end{equation}

 \begin{lemma}
\label{lem:positivity}
  The function $V$ possesses the following properties.
  \begin{itemize}
  \item[(a)] For any $\gamma>0, R>0$, uniformly in $x\in D_{R,\gamma}$ we have $V(tx)\sim u(tx)$ as $t\to\infty$.
  \item[(b)] For all $x\in K$ we have $V(x)\leq C(1+|x|^p)$.
  \item[(c)] The function $V$ is harmonic for the killed random walk, that is
    $$
    V(x)=\mathbf{E}\left[V(x+S(n_0)),\tau_x>n_0\right],\quad x\in K, n_0\geq1.
    $$
  \item[(d)] The function $V$ is strictly positive on $K_+$.
  \item[(e)] If $x\in K$, then $V(x)\leq V(x+x_0)$, for all  $x_0$  such that $x_0+K\subset K$.
  \end{itemize}
\end{lemma}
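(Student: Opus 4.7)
I would prove the five statements in the order (a), (e), (b), (c), (d), since (b) needs the monotonicity (e) to handle boundary points, the harmonicity (c) needs the uniform bound (b) to justify dominated convergence, and positivity (d) combines (a) with (c). Statement (a) is immediate from \eqref{V-est}: on $D_{R,\gamma}$ one has ${\rm dist}(x,\partial K)\ge\gamma|x|$, so the bad term $|x|^p/({\rm dist}(x,\partial K))^\gamma$ reduces to $C_\gamma|x|^{p-\gamma}$, giving $|V(x)-u(x)|\le C(1+|x|^{p-\gamma})$; combined with the lower bound $u(x)\ge C_1\gamma^p|x|^p$ obtained from Lemma~\ref{lem:u-prop} on $D_{R,\gamma}$, this forces $V(tx)/u(tx)\to1$ uniformly in $x\in D_{R,\gamma}$ as $t\to\infty$. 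For (e), I would argue from the defining limit: since $x_0+K\subset K$, the inclusion $\{\tau_x>k\}\subset\{\tau_{x+x_0}>k\}$ holds pathwise, so restricting the expectation for $V(x+x_0)$ to the smaller event and using $u\ge0$ on the complement gives
\begin{equation*}
V(x+x_0)\ge\lim_{k\to\infty}\mathbf{E}\bigl[u(x+x_0+g_k+S(k))-u(x+g_k+S(k));\tau_x>k\bigr].
\end{equation*}
Because $g_k+K\subset K$, on $\{\tau_x>k\}$ both $y:=x+g_k+S(k)$ and $y+x_0$ lie in $K$, and the problem reduces to the pointwise monotonicity $u(y+x_0)\ge u(y)$ on $K$. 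This property of the Brownian harmonic function follows from the classical asymptotic $\mathbf{P}_y(\tau^{BM}>t)\sim c\,t^{-p/2}u(y)$ together with the trivial coupling $B^{y+x_0}_t=B^y_t+x_0$, which yields $\tau^{BM}_{y+x_0}\ge\tau^{BM}_y$ pathwise.

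Statement (b) then follows from \eqref{V-est} combined with $u(x)\le C|x|^p$: the bound $V(x)\le C(1+|x|^p)$ is immediate when ${\rm dist}(x,\partial K)\ge1$, and for boundary points (e) applied with the shift $R_0x_0$ moves us to a point at distance $>1$ from $\partial K$, giving $V(x)\le V(x+R_0x_0)\le C(1+|x|^p)$. For harmonicity (c) I would use the equivalent representation $V(x)=\lim_k\mathbf{E}[u(x+S(k));\tau_x>k]$ (which follows from \eqref{eq.harmonic-app} together with Proposition~\ref{prop:main}) and apply the Markov property at time $n_0$:
\begin{equation*}
\mathbf{E}[u(x+S(k));\tau_x>k]=\mathbf{E}[\phi_{k-n_0}(x+S(n_0));\tau_x>n_0],\quad\phi_m(y):=\mathbf{E}[u(y+S'(m));\tau'_y>m].
\end{equation*}
Since $\phi_m(y)\to V(y)$ pointwise and can be dominated uniformly in $m$ by $C(1+|y|^p)$ (obtained by rerunning the decomposition of Proposition~\ref{prop:main} with the shift $g_m$, whose bounds are $m$-independent, and transferring back to $\phi_m$ via \eqref{eq.harmonic-app}), dominated convergence together with the moment assumption on $X$ yields $V(x)=\mathbf{E}[V(x+S(n_0));\tau_x>n_0]$.

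Finally, (d) is a short consequence of (c) iterated and (a): for $x\in K_+$ with the associated $\gamma>0$, choose $R$ so large that (a) forces $V(y)\ge u(y)/2\ge c>0$ uniformly on $D_{R,\gamma}\cap\{|y|\ge R\}$; then pick $n$ with $\mathbf{P}(x+S(n)\in D_{R,\gamma},\tau_x>n)>0$ and the identity from (c) gives $V(x)\ge c\,\mathbf{P}(x+S(n)\in D_{R,\gamma},\tau_x>n)>0$. I expect the main obstacle to be (c): producing an $m$-independent dominant for $\phi_m$ requires real care, since the natural decomposition works with the shift $g_m$ while $\phi_m$ is defined without it, so one must simultaneously control the $g_m$-shifted quantity via the uniform bounds on $W^{(1)}$--$W^{(3)}$ and the shift-removal error of \eqref{eq.harmonic-app}. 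A secondary delicate point is the pointwise monotonicity of $u$ used in (e), which is standard but not established explicitly in the paper.
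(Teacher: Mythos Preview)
Your plan is essentially correct and matches the paper's approach, which simply defers to Lemma~13 of \cite{DW15} together with \eqref{V-est}. Parts (a), (e), (b), (d) are fine as you outline them (modulo a typo in your displayed inequality for (e): the left side should be $V(x+x_0)-V(x)$, or equivalently drop the subtracted $u$-term on the right).

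There is, however, a small but genuine gap in your argument for (c). You claim that rerunning the decomposition of Proposition~\ref{prop:main} together with the shift-removal \eqref{eq.harmonic-app} yields the uniform dominant $\phi_m(y)\le C(1+|y|^p)$. But the bound \eqref{W3_1} on $W^{(3)}$ contains the term $|y|^p/({\rm dist}(y,\partial K))^\gamma$, so what you actually obtain uniformly in $m$ is
\[
\phi_m(y)\le C\Bigl(1+|y|^p+\frac{|y|^p}{({\rm dist}(y,\partial K))^\gamma}\Bigr),
\]
and the last term is not controlled when $x+S(n_0)$ lands near $\partial K$; hence it is not an admissible dominant for dominated convergence.

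The fix is exactly the idea you already use for (b): the monotonicity (e) holds not only for $V$ but for each $\phi_m$, by the identical argument (pathwise inclusion $\{\tau_y>m\}\subset\{\tau_{y+x_0}>m\}$ together with $u(z+x_0)\ge u(z)$ on $K$). Thus $\phi_m(y)\le\phi_m(y+R_0x_0)$, and at the shifted point the distance to $\partial K$ exceeds $1$, so the bad term is harmless there and you recover $\phi_m(y)\le C(1+|y|^p)$ uniformly in $m$. With this correction, your proof of (c) goes through.
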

The proof is identical with that of Lemma 13 in \cite{DW15}, for the proof of (c) one has to notice that 
\eqref{V-est} implies that $V(x)=u(x)+O(|x|^{p-\gamma})$ for $x\in D_{R,\gamma}$.
\section{Proof of Corollary~\ref{cor:int.lim}}
As we have mentioned in the introduction, 
the proofs of the claims in the corollaries 
are quite close to that in \cite{DW15}. 
We demonstrate the needed changes by deriving 
the tail asymptotics for $\tau_x$. 
Proofs of other results can similarly be adapted to the present 
setting.

Let 
$$
K_{n,\varepsilon}:=\left\{x\in K\,:\,{\rm dist}(x,\partial K)\ge n^{1/2-\varepsilon}\right\},
$$
where $\varepsilon<\gamma/(1+p).$
Similarly to  the proof of Lemma 20 in \cite{DW15} 
one can show that 
\begin{equation}\label{eq.lem20}
u(y+g_k)=(1+o(1)) u(y), \quad 
y\in K_{n,\varepsilon}, 
|y|\le\sqrt{n}
\end{equation}
uniformly in $k\le n^{1-\varepsilon}$. 
Indeed, by \eqref{diff-bound1}, 
$$
|u(y+g_k)-u(y)| 
\le C |g_k| |y|^{p-1}
=C\frac{|g_k|}{|y|} |y|^{p}
\le C n^{\varepsilon-\gamma} n^{p/2}.
$$
Also, by the lower bound in \eqref{u-bounds} 
$$
|u(y)| \ge C n^{p/2-p\varepsilon}. 
$$
Hence, 
$$
\frac{|u(y+g_k)-u(y)|}{|u(y)|} \le Cn^{(1+p)\varepsilon-\gamma},
$$
which proves \eqref{eq.lem20}, as $(1+p)\varepsilon<\gamma$. 

Then, it follows from \eqref{eq.lem20} that for any sequence $\theta_n\to0$,
\begin{align*}
&\mathbf{E}\left[u(x+S(\nu_n));\tau_x>\nu_n,\nu_n\le n^{1-\varepsilon},
|x+S(\nu_n)|\le\theta_n\sqrt{n}\right]\\
&\hspace{1cm}\sim
\mathbf{E}\left[u(x+g_{\nu_n}+S(\nu_n));\tau_x>\nu_n,
\nu_n\le n^{1-\varepsilon},|x+S(\nu_n)|\le\theta_n\sqrt{n}\right],
\end{align*}
where
$$
\nu_n:=\inf\{n\ge0:x+S(n)\in K_{n,\varepsilon}\}.
$$
Applying this to (50) in \cite{DW15}, we obtain
\begin{align*}
&\mathbf{P}(\tau_x>n,\nu_n\le n^{1-\varepsilon})\\
&\hspace{0.5cm}=\frac{\varkappa+o(1)}{n^{p/2}}\mathbf{E}\left[u(x+g_{\nu_n}+S(\nu_n));
\tau_x>\nu_n, \nu_n\le n^{1-\varepsilon}\right]\\
&\hspace{1cm}+
O\left(\frac{1}{n^{p/2}}\mathbf{E}\left[|x+S(\nu_n)|^p;
\tau_x>\nu_n, \nu_n\le n^{1-\varepsilon},|x+S(\nu_n)|>\theta_n\sqrt{n}\right]\right).
\end{align*}
In view of Lemma 24 in \cite{DW15}, $O$-term is $o(n^{-p/2})$. Therefore, the 
proof will be completed if we show that
\begin{equation}
\label{final-step}
\mathbf{E}\left[u(x+g_{\nu_n}+S(\nu_n));\tau_x>\nu_n, \nu_n\le n^{1-\varepsilon}\right]\to V(x).
\end{equation}
According to \eqref{decomp_main},
\begin{align*}
&\mathbf{E}\left[u(x+g_{\nu_n}+S(\nu_n));\tau_x>\nu_n, \nu_n\le n^{1-\varepsilon}\right]\\
&\hspace{1cm}=\mathbf{E}[u(x)-W^{(1)}_{\nu_n}(x)+W^{(2)}_{\nu_n}(x)+W^{(3)}_{\nu_n}(x);\nu_n\le n^{1-\varepsilon}].
\end{align*}
Now note that $\mathbf{P}(\nu_n\le n^{1-\varepsilon})\to 1$ 
by the functional central limit theorem. Therefore, 
since $\nu_n\to\infty$, as $n\to\infty$, 
by  Lemmas \ref{lem:W1} and \ref{lem:W2} 
and the dominated convergence theorem we obtain, as $n\to\infty$,
\begin{align*}
&\mathbf{E}[u(x)-W^{(1)}_{\nu_n}(x)+W^{(2)}_{\nu_n}(x);
\nu_n\le n^{1-\varepsilon}]\\
&\hspace{2cm}\to
u(x)-\mathbf{E}u(x+g_{\tau_x}+S(\tau_x))+\mathbf{E}[W^{(2)}(x)].
\end{align*}
Thus, \eqref{final-step} will follow from the convergence
\begin{equation}
\label{final-step1} 
\mathbf{E}[W^{(3)}_{\nu_n}(x);\nu_n\le n^{1-\varepsilon}]
\to\mathbf{E}\left[\sum_{l=0}^{\infty}f(x+g_l+S(l))\mathbb{I}\{\tau>l\}\right].
\end{equation}
It is immediate from the definition of $f(x)$ that the sequence
$$
Y_k:=W^{(3)}_k(x)-\sum_{l=0}^{k-1}f(x+g_l+S(l))\mathbb{I}\{\tau_x>l\}
$$
is a martingale. Upper bound \eqref{W3_1} 
together with the dominated convergence theorem 
imply that 
\begin{align}
  \label{final-step2}
  \nonumber
  &\mathbf{E}\left[\sum_{l=0}^{\nu_n-1}f(x+g_l+S(l))\mathbb{I}\{\tau_x>l\};
  \nu_n\le n^{1-\varepsilon}\right]\\
  &\hspace{2cm}\to
  \mathbf{E}\left[\sum_{l=0}^{\infty}f(x+g_l+S(l))\mathbb{I}\{\tau_x>l\}\right].
  \end{align}
Furthermore, by the optional stopping theorem,
\begin{align*}
&\mathbf{E}[Y_{\nu_n};\nu_n\le n^{1-\varepsilon}]
=
\mathbf{E}[Y_{\nu_n\wedge n^{1-\varepsilon}};\nu_n\le n^{1-\varepsilon}]
=\mathbf{E}[Y_{\nu_n\wedge n^{1-\varepsilon}}]-
\mathbf{E}[Y_{\nu_n\wedge n^{1-\varepsilon}};\nu_n> n^{1-\varepsilon}]
\\
&=-\mathbf{E}[Y_{n^{1-\varepsilon}};\nu_n> n^{1-\varepsilon}]\\
&=-\mathbf{E}[W^{(3)}_{n^{1-\varepsilon}}(x);\nu_n>n^{1-\varepsilon}]
+\mathbf{E}\left[\sum_{l=0}^{n^{1-\varepsilon}-1}f(x+g_l+S(l))
\mathbb{I}\{\tau_x>l\};\nu_n>n^{1-\varepsilon}\right].
\end{align*}
Using \eqref{W3_1}, the dominated convergence theorem and the fact that $\mathbf P(\nu_n>n^{1-\varepsilon})$
once again, we infer that
$$
\mathbf{E}[Y_{\nu_n};\nu_n\le n^{1-\varepsilon}]
=-\mathbf{E}[W^{(3)}_{n^{1-\varepsilon}}(x);\nu_n>n^{1-\varepsilon}]+o(1).
$$
Assume first that $p<1$. Recalling the definition of $W^{(3)}_k$ and
using \eqref{diff-bound2}, we have
$$
|W^{(3)}_k|\le C\sum_{l=1}^{k}|X(l)|^p\mathbb{I}\{\tau_x>l-1\}.
$$
Then, for every fixed $N\ge1$,
\begin{align*}
&\mathbf{E}[|W^{(3)}_{n^{1-\varepsilon}}(x)|;\nu_n>n^{1-\varepsilon}]\\
&\hspace{0.5cm}\le
C\mathbf{E}\left[\sum_{l=1}^{N}|X(l)|^p;\nu_n>n^{1-\varepsilon}\right]
+C\sum_{l=N+1}^{n^{1-\varepsilon}}\mathbf{E}|X(1)|^p
\mathbf{P}(\tau_x>l-1,\nu_n>l-1).
\end{align*}
The first summand on the right hand side converges to zero due to the fact
that $\mathbf{P}(\nu_n>n^{1-\varepsilon})\to0$. Furthermore, by Lemma 14 in \cite{DW15}, 
\begin{equation}
\label{old_lemma_14}
\mathbf{P}(\nu_n>n^{1-\varepsilon},\tau_x>n^{1-\varepsilon})
\le\exp\{-Cn^\varepsilon\}.
\end{equation}

Therefore, we obtain
we get
\begin{align*}
\sum_{l=N+1}^{n^{1-\varepsilon}}\mathbf{P}(\tau_x>l-1,\nu_n>l-1)
&\le \sum_{l=N+1}^{n^{1-\varepsilon}}\mathbf{P}(\tau_x>l-1,\nu_l>l-1)\\
&\le \sum_{l=N+1}^{\infty} e^{-Cl^\varepsilon}.
\end{align*}
Letting here $N\to\infty$, we conclude that
$$
\mathbf{E}[|W^{(3)}_{n^{1-\varepsilon}}(x)|;\nu_n>n^{1-\varepsilon}]\to0.
$$
It remains to prove this relation for $p\ge1$. In this case, using
\eqref{diff-bound}, we have
$$
|W^{(3)}_k|\le C\sum_{l=1}^{k}\left(|X(l)|^p+|X(l)||x+g_{l-1}+S(l-1)|^{p-1}\right)\mathbb{I}\{\tau_x>l-1\}.
$$
The summands with $|X(l)|^p$ have been already considered. For the remaining summands we have 
\begin{align*}
&\mathbf{E}\left[\sum_{l=1}^{n^{1-\varepsilon}}
|X(l)||x+g_{l-1}+S(l-1)|^{p-1}\mathbb{I}\{\tau_x>l-1\};\nu_n>n^{1-\varepsilon}\right]\\
&\le\mathbf{E}\left[\sum_{l=1}^{N}
|X(l)||x+g_{l-1}+S(l-1)|^{p-1};\nu_n>n^{1-\varepsilon}\right]\\
&\hspace{1cm}+\mathbf{E}|X(1)|\sum_{l=N+1}^{n^{1-\varepsilon}}
\mathbf{E}\left[|x+g_{l-1}+S(l-1)|^{p-1};\tau_x>l-1,\nu_n>l-1\right].
\end{align*}
The first summand converges again to zero
since $\mathbf{P}(\nu_n>n^{1-\varepsilon})\to0$. 
For the second summand 
applying the Cauchy-Schwarz inequality and then
using \eqref{old_lemma_14}, we have
\begin{align*}
&\mathbf{E}\left[|x+g_{l-1}+S(l-1)|^{p-1};\tau_x>l-1,\nu_n>l-1\right]\\
&\hspace{1cm}\le \left(\mathbf{E}\left[|x+g_{l-1}+S(l-1)|^{p}\right]\right)^{(p-1)/p}
\mathbf{P}^{1/p}(\tau_x>l-1,\nu_n>l-1)\\
&\hspace{1cm}\le Cl^{p/2}e^{-Cl^\varepsilon}.
\end{align*}
Therefore, letting $N\to\infty$, we complete the proof.
\section{Second proof of Theorem~\ref{thm:C2}}

For every $\varepsilon>0$ define
$$
\widetilde{K}_{n,\varepsilon}:= 
\left\{x\in K\,:\,{\rm dist}(x,\partial K)\ge \frac{1}{2}
\left(n^{1/2-\varepsilon}+\frac{|x|}{n^{2\varepsilon}}\right)\right\}.
$$

\subsection{Preliminary estimates.}
The next statement is the most important step in this proof of Theorem~\ref{thm:C2}.
\begin{proposition}
\label{prop:iteration}
Assume that the conditions of Theorem~\ref{thm:C2} are valid. 
Then, for every sufficiently small $\varepsilon>0$ there exists
$q>0$ such that
$$
\max_{k\le n}\Big|\mathbf{E}[u(x+S(k));\tau_x>k]-u(x)\Big|\le\frac{C}{n^q}u(x),
\quad x\in \widetilde{K}_{n,\varepsilon}.
$$
\end{proposition}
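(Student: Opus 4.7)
The plan begins with the telescoping identity
\begin{equation*}
\mathbf{E}[u(x+S(k));\tau_x>k]-u(x)=\sum_{l=0}^{k-1}\mathbf{E}[f(x+S(l));\tau_x>l],
\end{equation*}
which follows from \eqref{eq:defn.f}, the Markov property, and the convention $u\equiv 0$ outside $K$ (so the boundary terms at $l=\tau_x$ vanish). By the triangle inequality, the proposition reduces to showing
\begin{equation*}
\sum_{l=0}^{n-1}\mathbf{E}[|f(x+S(l))|;\tau_x>l]\le Cu(x)n^{-q}
\end{equation*}
uniformly for $x\in\widetilde{K}_{n,\varepsilon}$.

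I would estimate each summand by splitting on $E_l=\{|S(l)|\le\tfrac{1}{2}d\}$ with $d=\mathrm{dist}(x,\partial K)$. On $E_l$ the triangle inequality gives $\mathrm{dist}(x+S(l),\partial K)\ge d/2$ and $|x+S(l)|\le\tfrac{3}{2}|x|$, so Lemma~\ref{lem:bound.f} yields $|f(x+S(l))|\le C|x|^p/d^{2+\delta}$. Combining this with the two inequalities $|x|\le 2n^{2\varepsilon}d$ and $d\ge\tfrac{1}{2}n^{1/2-\varepsilon}$ valid on $\widetilde{K}_{n,\varepsilon}$, and with $u(x)\ge C_1d^p$ from Lemma~\ref{lem:u-prop}, summing the $E_l$-contributions over $l\le n$ gives a bound of order $Cu(x)\,n^{-\delta/2+\varepsilon(2p+2+\delta)}$, which is $\le Cu(x)n^{-\delta/4}$ once $\varepsilon$ is small relative to $\delta/(p+1)$.

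On the complementary event $\{|S(l)|>d/2\}$, I would use the crude bound $|f(y)|\le C(1+|y|^p)$ (from \eqref{diff-bound} together with $\mathbf{E}|X|^\alpha<\infty$) combined with Rosenthal/Fuk--Nagaev moment estimates and Lemma~\ref{lem:tail} to control the contribution by $C(|x|^p+\mathbf{E}[|S(l)|^p;|S(l)|>d/2])\,\mathbf{P}(|S(l)|>d/2,\tau_x>l)$ summed over $l$. When $|x|\gg\sqrt n$, the distance $d\gtrsim|x|/n^{2\varepsilon}$ is also far above $\sqrt n$, so these tails are polynomially small and absorbed into $Cu(x)n^{-q}$.

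The main obstacle is the intermediate regime $|x|\sim\sqrt n$, where $d$ may be only slightly above $\sqrt n$ and the bad event has non-negligible probability. To treat it I would partition the sum over $l$ at the diffusive scale $l\sim d^2$: for $l\lesssim d^2$ the Gaussian-like tail $\mathbf{P}(|S(l)|>d/2)\le C(l/d^2)^{\alpha/2}$ is sufficiently small, while for $l\gtrsim d^2$ one applies Lemma~\ref{lem:tail} to absorb $\mathbf{E}[|S(l)|^p;|S(l)|>l^{1/2+\varepsilon/2}]$ into $\mathbf{E}[\tau_x\wedge n]$ and uses the exit-time moment bounds of Lemma~\ref{moments} to close the estimate with a profit of a negative power of $n$.
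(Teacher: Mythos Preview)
Your telescoping identity is correct, and the estimate on the event $E_l=\{|S(l)|\le d/2\}$ is fine. The real difficulty, however, is the complementary event, and your treatment of it does not close.

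Take the worst case $d:={\rm dist}(x,\partial K)\sim n^{1/2-\varepsilon}$ with $|x|$ of the same order. Then $d^2\sim n^{1-2\varepsilon}$, so the range $l\in[d^2,n]$ contains order $n$ integers, and for each such $l$ the event $\{|S(l)|>d/2\}$ is \emph{typical} (its probability is bounded away from zero). On this event you invoke only the crude bound $|f(y)|\le C(1+|y|^p)$, which throws away all information about ${\rm dist}(y,\partial K)$. The resulting contribution is, up to constants,
\[
\sum_{l=d^2}^{n}\bigl(|x|^p+l^{p/2}\bigr)\mathbf{P}(\tau_x>l).
\]
Using Lemma~\ref{moments} in the form $\mathbf{P}(\tau_x>l)\le C|x|^{\beta}l^{-\beta/2}$ for any $\beta<p$, the sum $\sum_{l\le n} l^{p/2}\mathbf{P}(\tau_x>l)$ is of order $|x|^{\beta}n^{1+(p-\beta)/2}$, which compared to $u(x)\ge C_1d^p\sim n^{p(1/2-\varepsilon)}$ gives a ratio that grows like $n^{1+O(\varepsilon)}$, not $n^{-q}$. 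Lemma~\ref{lem:tail} does not help here: it controls only the atypical tail $\{|S(l)|>l^{1/2+\varepsilon/2}\}$, whereas for $l>d^2$ the bulk of $\{|S(l)|>d/2\}$ lies in $\{|S(l)|\le l^{1/2+\varepsilon/2}\}$, where you have no replacement for the crude bound. In short, once $|S(l)|$ exceeds $d/2$ there is nothing in your scheme to prevent $x+S(l)$ from sitting arbitrarily close to $\partial K$, and the factor ${\rm dist}(x+S(l),\partial K)^{-2-\delta}$ in Lemma~\ref{lem:bound.f} is lost.

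This is exactly the obstruction that the paper's proof is designed to avoid. For $k\in[\sqrt n,n]$ the paper replaces $x$ by the shifted point $x_k^+:=x+k^{1/2-\gamma}R_0x_0$; since $R_0x_0+K$ has positive distance from $\partial K$ and the cone is scale-invariant, one has ${\rm dist}(x_k^++S(l),\partial K)\ge ck^{1/2-\gamma}$ on the event $\{\tau_x>l\}$ for every $l\le k$. Thus the \emph{good} bound of Lemma~\ref{lem:bound.f} applies to $f(x_k^++S(l))$ uniformly in $l$, yielding a sum of size $(|x|^p+k^{p/2})k\cdot k^{-(1/2-\gamma)(2+\delta)}$, which is the needed $o(u(x))$. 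The price of the shift --- the differences $u(x_k^+)-u(x)$ and the boundary term $u(x_k^++S(\tau_x))$ --- is paid separately via Lemma~\ref{lem:u-diff}. To make your approach work you would need an analogous device guaranteeing a lower bound on ${\rm dist}(x+S(l),\partial K)$ along the trajectory, and none is present in the proposal.
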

\begin{lemma}\label{lem:A}
  For sufficiently small $\varepsilon$ there exists $q>0$ such that 
  $$
  \max_{k\in [\sqrt n,n]}\left|
  \e[u(x+S(k)),\tau_x>k]-u(x)
  \right|\le \frac{C}{n^q},\quad x\in\widetilde K_{n,\varepsilon}.
  $$
\end{lemma}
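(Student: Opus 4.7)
The plan is to reduce the claim to estimating a single sum via the telescoping identity from \eqref{decomp_main} specialised to $g_l\equiv 0$. Because $x+S(\tau_x)\notin K$ and $u$ has been extended by zero outside $K$, the boundary contribution vanishes and one is left with
\begin{equation*}
\mathbf{E}[u(x+S(k));\tau_x>k]-u(x)=\sum_{l=0}^{k-1}\mathbf{E}\left[f(x+S(l));\tau_x>l\right],
\end{equation*}
so it is enough to dominate this sum (for $k\le n$) by $Cn^{-q}u(x)$ for some $q>0$.

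I would introduce the good event
\begin{equation*}
G_l:=\left\{\max_{j\le l}|S(j)|\le \rho(x,n)\right\},\qquad \rho(x,n):=\tfrac14\bigl(n^{1/2-\varepsilon}+|x|/n^{2\varepsilon}\bigr),
\end{equation*}
chosen so that on $G_l$ the hypothesis $x\in\widetilde K_{n,\varepsilon}$ gives $\mathrm{dist}(x+S(l),\partial K)\ge \tfrac12\mathrm{dist}(x,\partial K)\ge\rho(x,n)$. Lemma~\ref{lem:bound.f} then yields
\begin{equation*}
|f(x+S(l))|\mathbb{I}\{G_l\}\le C\frac{(|x|+\rho(x,n))^{p}}{\rho(x,n)^{2+\delta}}.
\end{equation*}
Summing $n$ copies and dividing by the lower bound $u(x)\ge C_1\mathrm{dist}(x,\partial K)^p\gtrsim \rho(x,n)^p$ from \eqref{u-bounds}, a direct computation separating the regimes $|x|\le\sqrt n$ (where $\rho(x,n)\asymp n^{1/2-\varepsilon}$, $|x|\le\sqrt n$) and $|x|>\sqrt n$ (where $\rho(x,n)\asymp |x|/n^{2\varepsilon}$ and $|x|^p/|x|^{2+\delta}\le n^{-\delta/2}$) gives that the $G_l$-contribution is of order $n^{-\delta/2+O(\varepsilon)}u(x)$, which is negative for $\varepsilon$ small enough.

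For the complementary contribution $\sum_l\mathbf{E}[|f(x+S(l))|;\tau_x>l,G_l^c]$ I would use the crude bound $|f(y)|\le C(|y|^p+1)$, which follows from Lemma~\ref{lem:bound.f} for $|y|\le1$ and from the inequalities $\mathbf{E}u(y+X)\le C(|y|^p+\mathbf{E}|X|^p)$ and $u(y)\le C|y|^p$ otherwise. I then split $G_l^c\cap\{\tau_x>l\}$ via a H\"older inequality, controlling $\mathbf{P}(G_l^c,\tau_x>l)$ by a Fuk--Nagaev estimate (exactly as in the proof of Lemma~\ref{lem:tail}), and controlling $\mathbf{E}[|x+S(l)|^{p};\tau_x>l]$ by an argument analogous to \eqref{new.2a}--\eqref{new.4} combined with the moment bounds \eqref{MC}, \eqref{max-mom}. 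Both factors then decay fast enough that, after summation over $l\le n$ and division by the same lower bound on $u(x)$, one again obtains $O(n^{-q}u(x))$.

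The main obstacle is precisely that the set $\widetilde K_{n,\varepsilon}$ places no upper bound on $|x|$: the decay rate must be uniform in $x$, which forces a case split according to whether the dominant term in the defining inequality of $\widetilde K_{n,\varepsilon}$ is $n^{1/2-\varepsilon}$ or $|x|/n^{2\varepsilon}$. All the exponents $1,p,2+\delta,1/2-\varepsilon,2\varepsilon$ entering the final estimate have to combine into a strictly negative power of $n$ in \emph{both} regimes simultaneously; the margin is provided by the $\delta>0$ in Lemma~\ref{lem:bound.f}, and this is what dictates how small $\varepsilon$ must be chosen and what value of $q$ one finally obtains.
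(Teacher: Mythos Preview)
Your telescoping identity with $g_l\equiv 0$ is correct, and the ``good event'' part of your argument is fine. The genuine gap is in the treatment of $G_l^c$.

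Consider the regime $|x|\le \sqrt n$, so that $\rho(x,n)\asymp n^{1/2-\varepsilon}$, and take $l$ of order $n$. The typical size of $|S(l)|$ is $\sqrt{l}\asymp \sqrt{n}$, which is \emph{larger} than $\rho(x,n)$; in fact $\mathbf{P}(G_l^c)\to 1$. The Fuk--Nagaev bound you invoke contains the factor $(l/\rho^2)^{1/(a\sqrt d)}$, and here $l/\rho^2\asymp n^{2\varepsilon}\to\infty$, so it gives no decay. Nor does $\mathbf{P}(\tau_x>l)$ help: for $x\in\widetilde K_{n,\varepsilon}$ with $|x|\asymp\sqrt n$ one has $\mathbf{P}(\tau_x>n)$ bounded away from zero (and the moment bound \eqref{MC} only gives $\mathbf{P}(\tau_x>l)\le C|x|^\beta/l^{\beta/2}\asymp 1$). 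Consequently the $G_l^c$-contribution is, after the crude bound $|f(y)|\le C(1+|y|^p)$, of order
\[
\sum_{l\le n}\mathbf{E}\bigl[|x+S(l)|^p;\tau_x>l,G_l^c\bigr]\;\asymp\; n\cdot n^{p/2},
\]
while $u(x)\asymp n^{p(1/2-\varepsilon)}$; the ratio is $\asymp n^{1+p\varepsilon}$, which diverges. A H\"older split cannot rescue this: you would need moments of order $>p$, which are not assumed when $p>2$, and the probability factor has no decay to exploit anyway.

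The paper circumvents this difficulty by the shift $x_k^+:=x+g_k$ with $g_k=k^{1/2-\gamma}R_0x_0$. The point is that on $\{\tau_x>l\}$ one has $x+S(l)\in K$, hence $\mathrm{dist}(x_k^++S(l),\partial K)\ge C k^{1/2-\gamma}$ \emph{deterministically}, with no restriction whatsoever on $|S(l)|$. One can then apply Lemma~\ref{lem:bound.f} directly to every term of the telescoping sum, obtaining \eqref{eq:second}. The price is twofold: a shift error $|u(x_k^+)-u(x)|$ and $\mathbf{E}|u(x+S(k))-u(x_k^++S(k))|$, handled by the Lipschitz estimates \eqref{diff-bound}--\eqref{diff-bound2}; and a boundary term $\mathbf{E}[u(x_k^++S(\tau_x));\tau_x\le k]$ that no longer vanishes, but is small because $\mathrm{dist}(x_k^++S(\tau_x),\partial K)\le |g_k|$ and \eqref{u-bounds} applies. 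Both costs are of order $k^{1/2-\gamma}|x|^{p-1}+k^{p/2-\gamma}$, and after invoking \eqref{Ac.2}--\eqref{Ac.4} this is $\le Cn^{-q}u(x)$ on $\widetilde K_{n,\varepsilon}$.
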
  
\begin{proof}
For every $x\in K$ define
$$
x^{+}_k=x+g_k=x+ k^{1/2-\gamma}R_0x_0.
$$
Clearly,
\begin{align*}
&\mathbf{E}[u(x+S(k));\tau_x>k]\\
&\hspace{0.5cm}=\mathbf{E}[u(x+S(k))-u(x_k^++S(k));\tau_x>k]+\e[u(x_k^++S(k));\tau_x>k].
\end{align*}
If $p\ge1$ then, using \eqref{diff-bound}, we get
\begin{align*}
&\mathbf{E}[|u(x+S(k))-u(x_k^++S(k))|;\tau_x>k]\\
&\hspace{1cm}\le Ck^{1/2-\gamma}\mathbf{E}\left[|x|^{p-1}+|S(k)|^{p-1}+k^{(1/2-\gamma)(p-1)}\right]\\
&\hspace{1cm} \le Ck^{1/2-\gamma}\left(|x|^{p-1}+k^{(p-1)/2}\right).
\end{align*}
In the case $p<1$ we use \eqref{diff-bound2} to obtain
\begin{align*}
\mathbf{E}[|u(x+S(k))-u(x_k^++S(k))|;\tau_x>k]\le C|g_k|^p\le Ck^{p(1/2-\gamma)}.
\end{align*}
Combining these two cases, we have
\begin{align*}
\mathbf{E}[|u(x+S(k))-u(x_k^++S(k))|;\tau_x>k]
\le Ck^{1/2-\gamma}\left(|x|^{p-1}{\bf 1}\{p\ge1\}+k^{(p-1)/2}\right).
\end{align*}
Next, 
\begin{align*}
&\e[u(x_k^++S(k));\tau_x>k]\\
&\hspace{1cm}=
u(x_k^+)+
\sum_{l=1}^k
\left(
  \e[u(x_k^++S(l));\tau_x>l] - \e[u(x_k^++S(l-1));\tau_x>l-1]
  \right)\\
&\hspace{1cm}=
u(x_k^+)+
\sum_{l=1}^k
\e[u(x_k^++S(l))-u(x_k^++S(l-1);\tau_x>l-1]\\ 
&\hspace{2cm} - \sum_{l=1}^k \e[u(x_k^++S(l));\tau_x=l]\\
&\hspace{1cm}=
u(x_k^+)+
\sum_{l=1}^k 
\e[f(x_k^++S(l-1));\tau_x>l-1] 
-\e[u(x_k^++S(\tau_x));\tau_x\le k].
\end{align*}
Using \eqref{diff-bound} and \eqref{diff-bound2} once again, we have
\begin{equation}\label{eq:first}
|u(x_k^+) -u(x)|\le Ck^{1/2-\gamma}\left(|x|^{p-1}{\bf 1}\{p\ge1\}+k^{(1/2-\gamma)(p-1)}\right).
\end{equation}
By Lemma \ref{lem:bound.f},
\begin{align*}
 \left| \sum_{l=1}^k 
  \e[f(x_k^++S(l-1));\tau_x>l-1] \right|
  &\le C 
  \sum_{l=0}^{k-1} \e\left[\frac{|x_k^++S(l)|^p}{\mbox{dist}(x_k^++S(l),\partial K)^{2+\delta}};\tau_x>l\right].
\end{align*}  
Now note that on the event $\{\tau_x>l\}$ the random variable $x+S(l)\in K$. Hence 
$\mbox{dist}(x_k^++S(l),\partial K)\ge C k^{1/2-\gamma}$. 
Therefore, 
\begin{align}
  \left|\sum_{l=1}^k 
  \e[f(x_k^++S(l-1);\tau_x>l-1]\right| 
  &\le C 
  \sum_{l=0}^{k-1} \e\left[\frac{|x_k^++S(l)|^p}{k^{(1/2-\gamma)(2+\delta)}};\tau_x>l\right]\nonumber\\
  \le C\frac{k|x_k^+|^p+kk^{p/2}}{k^{(1/2-\gamma)(2+\delta)}}
  &\le C \frac{
               |x|^p+k^{p(1/2-\gamma)}+k^{p/2}}
               {k^{(1/2-\gamma)(2+\delta)-1}} \nonumber\\
  &\le C \frac{|x|^p+k^{p/2}}{k^{(1/2-\gamma)(2+\delta)-1}}. 
  \label{eq:second}
\end{align}  
If $p\ge1$ then, using \eqref{u-bounds} and the fact that $u(x)=0$ for $x\notin K$, we obtain 
\begin{align*}
\e[u(x_k^++S(\tau_x));\tau_x\le k]
&\le \e[|x_k^++S(\tau_x)|^{p-1}\mbox{dist}(x_k^++S(\tau_x),\partial K),\tau_x\le k]\\
&\le Ck^{1/2-\gamma} \e[|x_k^++S(\tau_x)|^{p-1};\tau_x\le k]\\
&\le Ck^{p(1/2-\gamma)}+Ck^{1/2-\gamma} \e[|x+S(\tau_x)|^{p-1};\tau_x\le k].
\end{align*}
To bound the second term we use the Burkholder inequality, 
\begin{align*}
\e[|x+S(\tau_x)|^{p-1};\tau_x\le k]
&\le C|x|^{p-1} +C \e[\max_{l\le k } |S(l)|^{p-1}]\\
&\le C|x|^{p-1} +C k^{(p-1)/2}. 
\end{align*}
Then, 
\begin{equation}
  \label{eq:third}
  \e[u(x_k^++S(\tau_x));\tau_x\le k] 
  \le Ck^{p(1/2-\gamma)}+ Ck^{p/2-\gamma}
+Ck^{1/2-\gamma} |x|^{p-1}.
\end{equation}  
If $p<1$ then, applying \eqref{diff-bound2}, we obtain
$$
\e[u(x_k^++S(\tau_x));\tau_x\le k] \le Ck^{p/2-p\gamma}.
$$
In other words, \eqref{eq:third} holds also for $p<1$.

Combining now \eqref{eq:first}, \eqref{eq:second} and \eqref{eq:third},
we obtain 
\begin{align*}
&|\e[u(x+S(k));\tau_x>k]-u(x)|\\
&\hspace{1cm}\le 
C\left(k^{1/2-\gamma}|x|^{p-1}+k^{p(1/2-\gamma)}
+k^{p/2-\gamma}+\frac{|x|^p+k^{p/2}}{k^{(1/2-\gamma)(2+\delta)-1}}
\right).
\end{align*}
We can assume that $\gamma<1/2 $ is sufficiently small to ensure 
that 
$\gamma<p/2$ and 
$$
p/2>(1/2-\gamma)(2+\delta)-1
=\delta/2-2\gamma-\gamma\delta
>0. 
$$
Then, 
\begin{align*}
  &\max_{\sqrt n\le k\le n }|\e[u(x+S(k));\tau_x>k]-u(x)|\\
  &\hspace{0.5cm}\le 
  C\biggl(
    n^{1/2-\gamma}|x|^{p-1}+n^{p(1/2-\gamma)}
    +n^{p/2-\gamma}
    +n^{p/2-(\delta/2-2\gamma-\gamma\delta)}
+\frac{|x|^p}{n^{\delta/4-\gamma-\gamma\delta/2}}
  \biggr).
\end{align*}  

For every $x\in\widetilde{K}_{n,\varepsilon}$ one has
$$
|x|\le 2n^{2\varepsilon}{\rm dist}(x,\partial K)
\quad\text{and}\quad
{\rm dist}(x,\partial K)\ge \frac{1}{2}n^{1/2-\varepsilon}.
$$
Combining these estimates with the lower bound in \eqref{u-bounds}, we obtain
\begin{equation}
  \label{Ac.2} 
  |x|^{p}\le\frac{2^{p}n^{2p\varepsilon}}{C_1}u(x),
\end{equation}
\begin{equation}
\label{Ac.3}
|x|^{p-1}\le\frac{2^{p-1}n^{2(p-1)\varepsilon}}{C_1}\frac{u(x)}{{\rm dist}(x,\partial K)}
\le \frac{2^p}{C_1}n^{(2p-1)\varepsilon}\frac{u(x)}{n^{1/2}}
\end{equation}
and
\begin{equation}
\label{Ac.4}
n^{p/2}\le 2^pn^{p\varepsilon}\left({\rm dist}(x,\partial K)\right)^p\le\frac{2^p}{C_1}u(x)n^{p\varepsilon}.
\end{equation}
Taking into account \eqref{Ac.2},\eqref{Ac.3} and \eqref{Ac.4}, we arrive at the bound
\begin{align*}
  &\max_{\sqrt n\le k\le n }|\e[u(x+S(k));\tau_x>k]-u(x)|\\
  &\le 
  Cu(x)\biggl(
    n^{(2p-1)\varepsilon-\gamma}+n^{p(\varepsilon-\gamma)}
    +n^{p\varepsilon-\gamma}
    +n^{p\varepsilon-(\delta/2-2\gamma-\gamma\delta)}
    +n^{2p\varepsilon-(\delta/4-\gamma-\gamma\delta/2)}
  \biggr).
\end{align*}   
Clearly, we can pick sufficiently small $\varepsilon>0$ in such a way 
that all exponents on the right hand side of the previous inequality are negative.
This completes the proof of the lemma.
\end{proof}
\begin{proof}[Proof of Proposition~\ref{prop:iteration}]
If $k\in[\sqrt{n},n]$ then the desired estimate is immediate from Lemma~\ref{lem:A}. Thus, it remains to consider the case $k<\sqrt{n}$. Clearly, 
\begin{align}
\label{prop.proof.1}
\nonumber
&\mathbf{E}[u(x+S(k));\tau_x>k]-u(x)\\
&\hspace{1cm}=\mathbf{E}[u(x+S(k))-u(x);\tau_x>k]-u(x)\mathbf{P}(\tau_x\le k)
\end{align}
By the Doob inequality, for every $x\in\widetilde{K}_{n,\varepsilon}$,
\begin{align}
\label{prop.proof.2}
\mathbf{P}(\tau_x\le k)\le\mathbf{P}\left(\max_{j\le k}|S(j)|^2\ge n^{1-2\varepsilon}\right)
\le C\frac{k}{n^{1-2\varepsilon}}.
\end{align}
Using \eqref{diff-bound} and \eqref{diff-bound2}, we conclude that, for all $k\le\sqrt{n}$,
\begin{align*}
\mathbf{E}\left[|u(x+S(k))-u(x)|;\tau_x>n\right]
&\le C\mathbf{E}\left[|S(k)||x|^{p-1}{\bf 1}\{p\ge1\}+|S(k)|^p\right]\\
&\le C\left(n^{1/4}|x|^{p-1}+n^{p/4}\right).
\end{align*}
Taking into account \eqref{Ac.3} and \eqref{Ac.4}, we obtain
\begin{equation}
\label{prop.proof.3}
\max_{k\le\sqrt{n}}\mathbf{E}\left[|u(x+S(k))-u(x)|;\tau_x>n\right]
\le\frac{C}{n^q}u(x),\quad x\in\widetilde{K}_{n,\varepsilon}.
\end{equation}
Combining \eqref{prop.proof.1}--\eqref{prop.proof.3} completes the proof of the proposition.
\end{proof}
Define
$$
\nu_n:=\inf\left\{n\ge0:x+S(n)\in K_{n,\varepsilon}\right\}
$$
and
$$
\widetilde{\nu}_n:=\inf\left\{n\ge0:x+S(n)\in \widetilde{K}_{n,\varepsilon}\right\}.
$$
\begin{lemma}
\label{lem:nu}
There exists $\gamma>0$ such that, for every $x\in K$,
$$
\max_{k\in[n^{1-\varepsilon},n]}
\mathbf{E}\left[u(x+S(k));\tau_x>k,\widetilde{\nu}_n> [n^{1-\varepsilon}]\right]\le \frac{C(1+|x|^{p-\gamma})}{n^q}.
$$
\end{lemma}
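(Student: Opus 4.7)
The plan is to apply the Markov property at the intermediate time $m:=[n^{1-\varepsilon}]$, which reduces the claim to two ingredients: a crude moment estimate for the walk after time $m$, and an exponential-type probability bound (in the spirit of Lemma~14 of \cite{DW15}, invoked as \eqref{old_lemma_14}) that the walk can avoid $\widetilde{K}_{n,\varepsilon}$ while staying in $K$ up to time $m$.

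By the Markov property at $m$ and the nonnegativity of the extended $u$, for any $k\in[m,n]$,
$$
\mathbf{E}[u(x+S(k));\tau_x>k,\widetilde\nu_n>m]
\le \mathbf{E}\bigl[h(x+S(m),k-m);\,\tau_x>m,\widetilde\nu_n>m\bigr],
$$
where $h(y,j):=\mathbf{E}_y[u(y+S(j))]$. The upper bound $u(z)\le C|z|^p$ combined with Rosenthal's inequality (for $p>2$) or Jensen (for $p\le 2$) yields $h(y,j)\le C(|y|^p+j^{p/2})\le C(|y|^p+n^{p/2})$, so it is enough to estimate
$$
J_1:=\mathbf{E}[|x+S(m)|^p;\tau_x>m,\widetilde\nu_n>m]\quad\text{and}\quad J_2:=n^{p/2}\mathbf{P}(\tau_x>m,\widetilde\nu_n>m).
$$
For both quantities I would split according to the ``good'' event $\{M(m)\le R_n\}$, with $M(m):=\max_{j\le m}|S(j)|$ and a threshold $R_n$ comparable to $|x|+n^{1/2+\varepsilon'}$ for a small $\varepsilon'<\varepsilon$. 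On the good event $|x+S(j)|\le |x|+R_n$ for every $j\le m$, hence $|x+S(j)|/n^{2\varepsilon}$ is controlled; together with the first term $n^{1/2-\varepsilon}$ in the definition of $\widetilde{K}_{n,\varepsilon}$, failure to enter $\widetilde{K}_{n,\varepsilon}$ forces $x+S(j)$ to stay in an $O(n^{1/2-\varepsilon})$-neighborhood of $\partial K$. After a cosmetic rescaling $n\mapsto n'$ of polynomial order this brings the event under the umbrella of \eqref{old_lemma_14} and gives the exponential bound $\mathbf{P}(\tau_x>m,\widetilde\nu_n>m,M(m)\le R_n)\le e^{-Cn^{\varepsilon}}$, which crushes any polynomial prefactor. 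The complementary event $\{M(m)>R_n\}$ is handled with Lemma~\ref{lem:tail} and the moment assumption from Theorem~\ref{thm:C2}, producing a polynomial gain of the form $n^{-q}(1+|x|^{p-\gamma})$.

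The main obstacle is the regime where $|x|$ itself is of order $n^{1/2+\varepsilon'}$ or larger, so that $x$ need not lie in $\widetilde{K}_{n,\varepsilon}$ at time $0$ and the $|x|/n^{2\varepsilon}$ summand in the definition of $\widetilde{K}_{n,\varepsilon}$ makes the reduction to \eqref{old_lemma_14} delicate. Here $R_n$ must be chosen as a function of $|x|$, and a H\"older-type interpolation between the $p$-th moment bound on $|x+S(m)|$ (from Rosenthal together with Lemma~\ref{lem:tail}) and the smallness of $\mathbf{P}(\tau_x>m,\widetilde\nu_n>m)$ is required to produce a $|x|^{p-\gamma}$ factor (with the crucial gain $\gamma>0$) rather than a bare $|x|^p$ in the final estimate. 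Once this balance is struck, summing the contributions of $J_1$ and $J_2$ on the two events yields the claimed bound uniformly in $k\in[n^{1-\varepsilon},n]$.
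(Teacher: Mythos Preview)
Your overall strategy---apply the Markov property at $m=[n^{1-\varepsilon}]$, split on the size of the running maximum, use \eqref{old_lemma_14} on the ``small'' part and Lemma~\ref{lem:tail} on the ``large'' part---is exactly what the paper does. The genuine gap is that you pass at the very start from $u(x+S(k))$ to the crude bound $h(y,j)\le C(|y|^p+n^{p/2})$, i.e.\ you replace $u$ by $|\cdot|^p$. This throws away the only piece of information carried by the event $\{\widetilde\nu_n>m\}$ at time $m$: that $x+S(m)$ lies \emph{close to the boundary relative to its norm}. The paper keeps $u(x+S(m))$ and uses the sharp upper bound $u(y)\le C|y|^{p-1}{\rm dist}(y,\partial K)$ from \eqref{u-bounds}; since on $\{\widetilde\nu_n>m\}$ one has ${\rm dist}(x+S(m),\partial K)\le \tfrac12\bigl(n^{1/2-\varepsilon}+|x+S(m)|/n^{2\varepsilon}\bigr)$, this yields $u(x+S(m))\le Cn^{-2\varepsilon}|x+S(m)|^p+Cn^{1/2-\varepsilon}|x+S(m)|^{p-1}$ and hence an automatic $n^{-c\varepsilon}$ gain in front of the $p$-th moment. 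That gain is precisely what produces the $n^{-q}$ in the conclusion once one combines it with Lemma~\ref{lem:tail}.

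Without that gain your $J_1$ cannot be controlled. Concretely, take $K$ a half-space in $\mathbb{R}^2$ (so $p=1$, $u(y)=y_2$) and $x=(B,B/n^{3\varepsilon})$ with $B$ very large. Then ${\rm dist}(x,\partial K)=B/n^{3\varepsilon}<\tfrac12|x|/n^{2\varepsilon}$, so $x\notin\widetilde K_{n,\varepsilon}$; moreover $\tau_x>n$ and $\widetilde\nu_n>m$ hold with probability close to $1$ since the walk fluctuates only by $O(n^{1/2})\ll B/n^{3\varepsilon}$. Thus $J_1\approx|x|^p=B$, while the target bound is $C(1+B^{1-\gamma})/n^q$; no choice of $\gamma,q>0$ makes $B\le CB^{1-\gamma}/n^q$ for all $B$. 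Your proposed ``H\"older interpolation'' cannot repair this, because there is no extra smallness in the probability to interpolate against: $\mathbf{P}(\tau_x>m,\widetilde\nu_n>m)$ is of order~$1$ in this regime, and your reduction to \eqref{old_lemma_14} on the good event fails here since $|x+S(j)|/n^{2\varepsilon}\approx B/n^{2\varepsilon}\gg n^{1/2-\varepsilon}$. The fix is to retain $u(x+S(m))$ on the bad event and invoke \eqref{u-bounds} exactly as the paper does.
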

\begin{proof}
Set 
$$
M(k):=\max_{j\le k}|x+S(j)|
$$
and split the expectation into two parts:
\begin{align}
\label{nu.1}
\nonumber
&\mathbf{E}\left[u(x+S(k));\tau_x>k,\widetilde{\nu}_n>[n^{1-\varepsilon}]\right]\\
\nonumber
&\hspace{1cm}=\mathbf{E}\left[u(x+S(k));\tau_x>k,\widetilde{\nu}_n>[n^{1-\varepsilon}],M([n^{1-\varepsilon}])\le n^{1/2+\varepsilon/2}\right]\\
&\hspace{2cm}+\mathbf{E}\left[u(x+S(k));\tau_x>k,\widetilde{\nu}_n>[n^{1-\varepsilon}],M([n^{1-\varepsilon}])> n^{1/2+\varepsilon/2}\right].
\end{align}
Set, for brevity, $m=[n^{1-\varepsilon}]$. Now, using the relation, 
\begin{align*}
  &\left\{\widetilde{\nu}_n> m, M(m)\le n^{1/2+\varepsilon/2}\right\}\\
  &\subset\left\{{\rm dist}(x+S(j),\partial K)\le \frac{1}{2}\left(n^{1/2-\varepsilon}+\frac{|x+S(j)|}{n^{2\varepsilon}}\right),
  |x+S(j)|\le n^{1/2+\varepsilon/2}, j\le m\right\}\\
  &\subset\left\{{\rm dist}(x+S(j),\partial K)\le n^{1/2-\varepsilon}, j\le m\right\}
  =\left\{\nu_n> m\right\},
  \end{align*}
we obtain 
\begin{align*}
  &\mathbf{E}\left[u(x+S(k));\tau_x>k,\widetilde{\nu}_n> n^{1-\varepsilon},M(m)\le n^{1/2+\varepsilon/2}\right]\\
  &\hspace{1cm}\le \mathbf{E}\left[u(x+S(k));\tau_x>k,\nu_n> m,M(m)\le n^{1/2+\varepsilon/2}\right].
  \end{align*}
  
Now, if $p\ge 1$ then by \eqref{diff-bound}, 
\begin{align*}
  u(x+S(k))\le u(x+S(m))
  +C|S(k)-S(m)|^{p}
  +C|S(k)-S(m)|
  |x+S(m)|^{p-1}.  
\end{align*}  
If $p<1$ we make use of \eqref{diff-bound2} to obtain 
\begin{align*}
    u(x+S(k))\le u(x+S(m))
    +C|S(k)-S(m)|^{p}
  \end{align*}  
As a result,
\begin{align}
\label{nu.1+}
\nonumber
u(x+S(k))\le u(x+S(m))
  &+C|S(k)-S(m)|^{p}\\
  &+C|S(k)-S(m)|
  |x+S(m)|^{p-1}{\bf 1}\{p\ge1\}. 
\end{align}

Hence, 
\begin{align*}
  &\mathbf{E}\left[u(x+S(k));\tau_x>k,\nu_n> m,M(m)\le n^{1/2+\varepsilon/2}\right]
 \\
 &\le 
 \mathbf{E}\left[u(x+S(m));\tau_x>k,\nu_n> m,M(m)\le n^{1/2+\varepsilon/2}\right]\\
 &+
 C\mathbf{E}\left[|S(k)-S(m)|^p;\tau_x>k,\nu_n> m,M(m)\le n^{1/2+\varepsilon/2}\right]\\
 &+
 C{\bf 1}\{p\ge1\}\mathbf{E}\left[|S(k)-S(m)||x+S(m)|^{p-1};\tau_x>k,\nu_n> m,M(m)\le n^{1/2+\varepsilon/2}\right].
\end{align*}  
First, since $u(y)\le C|y|^p$,
by \eqref{old_lemma_14}, we conclude that
\begin{multline}
\label{nu.2.1}
\max_{k\in[n^{1-\varepsilon},n]}
\mathbf{E}\left[u(x+S(m));\tau_x>k,\nu_n> m,M(m)\le n^{1/2+\varepsilon/2}\right]
\\ 
\le Cn^{p/2+\varepsilon/2}
\pr\left(\tau_x>m,\nu_n> m\right)
\le Cn^{p/2+\varepsilon/2} e^{-cn^\varepsilon}
\end{multline}
and
\begin{align}
  \label{nu.2.2}
  \nonumber
  &\max_{k\in[n^{1-\varepsilon,n}]}
  \mathbf{E}\left[|S(k)-S(m)|^p;\tau_x>k,\nu_n> m,M(m)\le n^{1/2+\varepsilon/2}\right]\\
  \nonumber
  &\hspace{1cm}\le 
  \max_{k\in[n^{1-\varepsilon,n}]}\mathbf{E}\left[|S(k)-S(m)|^p\right]\pr\left(\tau_x>m,\nu_n> m\right)\\
  &\hspace{1cm}\le C n^{p/2}e^{-cn^\varepsilon}. 
\end{align}  
Second, for $p\ge 1$ one has by the same argument,  
\begin{align}
  \label{nu.2.3}
  \nonumber
  &\max_{k\in[n^{1-\varepsilon,n}]}
  \e\left[|S(k)-S(m)||x+S(m)|^{p-1};\tau_x>k,\nu_n> m,M(m)\le n^{1/2+\varepsilon/2}\right] \\
  &\hspace{1cm}\le Cn^{1/2}n^{(p-1)/2+\varepsilon/2}e^{-cn^\varepsilon}.
\end{align}
Therefore, combining  \eqref{nu.2.1}, \eqref{nu.2.2} and \eqref{nu.2.3}
we obtain, 
that the first expectation on the right hand side of \eqref{nu.1}
can be estimated as follows, 
\begin{align}
  \label{nu.2}
  \nonumber
  &\max_{k\in[n^{1-\varepsilon,n}]} \mathbf{E}\left[u(x+S(k));\tau_x>k,\widetilde{\nu}_n>[n^{1-\varepsilon}],M([n^{1-\varepsilon}])\le n^{1/2+\varepsilon/2}\right]\\
  &\hspace{1cm}\le 
  Cn^{p/2+\varepsilon/2} e^{-cn^\varepsilon}. 
\end{align}  
Using \eqref{nu.1+}, we have for the second expectation on the right hand side of \eqref{nu.1},
\begin{align*}
&
\mathbf{E}\left[u(x+S(k));\tau_x>k,
\widetilde{\nu}_n> m,M(m)>  n^{1/2+\varepsilon/2}\right]\\
 &\hspace{0.5cm}\le 
 \mathbf{E}\left[u(x+S(m));\tau_x>m,
 \widetilde \nu_n> m,
 M(m)> n^{1/2+\varepsilon/2}\right]\\
 &\hspace{1cm}+
 C\mathbf{E}\left[|S(k)-S(m)|^p;\tau_x>m,
 M(m)> n^{1/2+\varepsilon/2}\right]\\
 &\hspace{1cm}+
 C{\bf 1}\{p\ge1\}\mathbf{E}\left[|S(k)-S(m)||x+S(m)|^{p-1};\tau_x>m,
 M(m)> n^{1/2+\varepsilon/2}\right]\\
 &\hspace{0.5cm}:=E_1+E_2+E_3.
\end{align*}
To estimate $E_1$ we apply the upper bound from \eqref{u-bounds}, 
and use the fact that on the event $\{\widetilde \nu_n>m\}$, 
$$
{\rm dist}(x+S(m),\partial K) \le \frac{1}{2}\left(
  n^{1/2-\varepsilon}+\frac{|x+S(m)|}{n^{2\varepsilon}} 
  \right).
$$
Then, 
\begin{align*}
  E_1&\le \frac{1}{2}n^{1/2-\varepsilon} 
  \mathbf{E}\left[(x+S(m))^{p-1};\tau_x>m,
 M(m)> n^{1/2+\varepsilon/2}\right]\\
 &\hspace{0.5cm}+
 \frac{1}{2n^{2\varepsilon}} 
  \mathbf{E}\left[(x+S(m))^{p};\tau_x>m,
 M(m)> n^{1/2+\varepsilon/2}\right].
\end{align*}  
Using independence of increments  we obtain 
$$
E_2\le Cn^{p/2}
\pr\left(\tau_x>m,
M(m)> n^{1/2+\varepsilon/2}\right)
$$
and 
$$
E_3\le Cn^{1/2} 
\mathbf{E}\left[(x+S(m))^{p-1};\tau_x>m,
 M(m)> n^{1/2+\varepsilon/2}\right].
$$
Combining these estimates and using the Markov inequality, we obtain 
\begin{align*}
  E_1+E_2+E_3\le 
  \frac{C}{n^{\min(p\varepsilon/2,\varepsilon/2)}} 
  \mathbf{E}\left[(M(m))^{p};\tau_x>m,
 M(m)> n^{1/2+\varepsilon/2}\right].
\end{align*}  
Now note that by Lemma~\ref{lem:tail}, 
\begin{equation}
  \label{nu.almost}
  \mathbf{E}\left[(M(m))^{p};\tau_x>m,
 M(m)> n^{1/2+\varepsilon/2}\right]\le C\mathbf{E}[\tau_x\wedge n].
\end{equation}  
Note that for $p>2$ the desired statement immediately follows from \eqref{MC}. 
If $p\le2$ then, using \eqref{MC}, 
$$
\mathbf{E}[\tau_x\wedge n]\le n^{1-p/2+\delta/2}\mathbf{E}[\tau_x^{p/2-\delta/2}]
\le C(1+|x|^{p-\delta})n^{1-p/2+\delta/2}.
$$
By the assumption $\mathbf{E}|X|^{2+\delta}<\infty$,
$$
\mathbf{E}\left[|X|^p;|X|>an^{1/2+\varepsilon/2}\right]
\le Cn^{-(1/2+\varepsilon/2)(2+\delta-p)}.
$$
Then using directly  the last inequality in the proof of Lemma~\eqref{lem:tail} we can see that 
 \eqref{nu.almost} remains valid for $p\le 2$.
The proof is complete. 
\end{proof}

\begin{remark}\label{without_mc}
  The only place we need to use the results of \cite{MC84} is the end of the last Lemma.
  To make the proof self-contained  we can use a different estimate 
  in \eqref{nu.almost}. Namely, we can directly use the estimate \eqref{eq.max.nag}
  with $t=p$ and then apply estimates $\e[\tau_x\wedge n]\le n$ and
  further assuming  that $\e|X|^{p+2}<\infty$ the Markov inequality to probability. 
  This would give the desired estimate in \eqref{nu.almost}. Thus we can avoid using
   the results of \cite{MC84} by imposing 2 additional moments.
\end{remark}  
\subsection{Proof of Theorem~\ref{thm:C2}}
Fix a large integer $n_0>0$ and 
put, for $m\ge 1$,
$$
n_m=[n_0^{((1-\varepsilon)^{-m})}],
$$
where $[r]$ denotes the integer part of $r$. Let $n$ be any integer. 
There exists unique $m$ such that $n\in (n_m,n_{m+1}]$. 
We first split the  expectation into two  parts,
\begin{align*}
&\mathbf{E}[u(x+S(n));\tau_x>n]=E_{1}(x)+E_{2}(x)\\
&\hspace{1cm}:=\mathbf{E}\left[u(x+S(n));\tau_x>n,\widetilde{\nu}_n\leq n_m\right]
+\mathbf{E}\left[u(x+S(n));\tau_x>n,\widetilde{\nu}_n> n_m\right].
\end{align*}
By Lemma~\ref{lem:nu}, since $n_m\ge n^{1-\varepsilon}$,
the second term on the right hand side is bounded by
\begin{align*}
E_{2}(x)\le\frac{C(x)}{n_m^q},
\end{align*}
where
$$
C(x)=C(1+|x|^{p-\gamma}).
$$
For the first term we have 
\begin{align*}
 E_{1}(x)&=\sum_{i=1}^{n_m}\int_{\widetilde{K}_{n,\varepsilon}}\mathbf P\{\widetilde{\nu}_n=i,\tau_x>i, x+S(i)\in dy\}
\mathbf{E}[u(y+S(n-i));\tau_y>n-i].
\end{align*}
Then, by Proposition~\ref{prop:iteration},
\begin{align*}
E_{1}(x)&\le \left(1+\frac{C}{n^q}\right)
\sum_{i=1}^{n_m}\int_{\widetilde{K}_{n,\varepsilon}}\mathbf P\{\widetilde{\nu}_n=i,\tau_x>i, x+S(i)\in dy\}u(y)\\
&\le \frac{\left(1+\frac{C}{n^q}\right)}{\left(1-\frac{C}{n_m^q}\right)}
\sum_{i=1}^{n_m}\int_{\widetilde{K}_{n,\varepsilon}}\mathbf P\{\widetilde{\nu}_n=i,\tau_x>i, x+S(i)\in dy\}\\
&\hspace{4cm}\times
\mathbf{E}[u(y+S(n_m-i));\tau_y>n_m-i]\\
&=\frac{\left(1+\frac{C}{n_m^q}\right)}{\left(1-\frac{C}{n_m^q}\right)}
\mathbf E[u(x+S(n_m));\tau_x>n_m,\widetilde{\nu}_n\le n_m].
\end{align*}
As a result we have
\begin{align}\label{L3.1}
\mathbf E[u(x+S(n));\tau_x>n]\leq
\frac{\left(1+\frac{C}{n_m^q}\right)}{\left(1-\frac{C}{n_m^q}\right)}
\mathbf E[u(x+S(n_m)); \tau_x>n_m]+\frac{C(x)}{n_m^q}.
\end{align}
Iterating this procedure $m$ times, we obtain
\begin{align}
\label{L3.2}
\nonumber
&\max_{n\in(n_m,n_{m+1}]}\mathbf{E}[u(x+S(n));\tau_x>n]\\
&\hspace{1cm}\leq
\prod_{j=0}^{m}\frac{\left(1+\frac{C}{n_j^q}\right)}{\left(1-\frac{C}{n_j^q}\right)}
\left(\mathbf E[u(x+S(n_0)); \tau_x>n_0]+C(x)\sum_{j=0}^{m}n_j^{-q}\right).
\end{align}
Since $n_m$ grows exponentially fast, we infer that 
\begin{equation}
\label{eq:sup}
\sup_n \mathbf E[u(x+S(n));\tau_x>n]\le C(x)<\infty. 
\end{equation}

An identical procedure gives a lower bound 
\begin{align}\label{L3.3}
\nonumber
&\mathbf E[u(x+S(n));\tau_x>n]\geq E_1(x)\\
\nonumber
&\geq\frac{\left(1-\frac{C}{n_m^q}\right)}{\left(1+\frac{C}{n_m^q}\right)}
\mathbf{E}[u(x+S(n_m));\tau_x>n_m,\widetilde{\nu}_{n}\leq n_m]\\
\nonumber
&\ge\frac{\left(1-\frac{C}{n_m^q}\right)}{\left(1+\frac{C}{n_m^q}\right)}
\Big(\mathbf{E}[u(x+S(n_m));\tau_x>n_m]-\mathbf{E}[u(x+S(n_m));\tau_x>n_m,\widetilde{\nu}_{n}>n_m]\Big)\\
\nonumber
&\ge\frac{\left(1-\frac{C}{n_m^q}\right)}{\left(1+\frac{C}{n_m^q}\right)}
\Big(\mathbf{E}[u(x+S(n_m));\tau_x>n_m]-C(x)n_m^{-q}\Big)\\
&\ge\prod_{j=0}^m \frac{\left(1-\frac{C}{n_j^q}\right)}{\left(1+\frac{C}{n_j^q}\right)}\mathbf{E}[u(x+S(n_0));\tau_x>n_0]
-C(x)\sum_{j=0}^mn_j^{-q}.
\end{align}
For every positive $\delta$ we can choose $n_0=n_0(\delta)$ such that 
for all $m\ge 1$, 
$$
\left|\prod_{j=0}^{m}\frac{\left(1-\frac{C}{n_j^q}\right)}{\left(1+\frac{C}{n_m^q}\right)}-1\right|\leq\delta
\quad\text{and}\quad \sum_{j=0}^{m}n_{j}^{-q}\leq\delta.
$$
Then, for this value of $n_0$ and all $x\in K$,
$$
\sup_{n>n_0}\mathbf E[u(x+S(n));\tau_x>n]
\leq (1+\delta)\mathbf E [u(x+S(n_0));\tau_x>n_0]+C(x)\delta
$$  
and 
\begin{equation}
  \label{eq.pos}
\inf_{n>n_0}\mathbf E [u(x+S(n));\tau_x>n]
\geq (1-\delta)\mathbf E  [u(x+S(n));\tau_x>n_0]-C(x)\delta. 
\end{equation}
Consequently,
\begin{align*}
\sup_{n>n_0}\mathbf E [u(x+S(n));\tau_x>n]
-\inf_{n>n_0}\mathbf E [u(x+S(n));\tau_x>n]\\
\leq 2\delta 
\mathbf E [u(x+S(n_0));\tau_x>n_0]+2C(x)\delta.
\end{align*}
Taking into account (\ref{eq:sup}) and  that $\delta$ can be made arbitrarily small we conclude that 
the limit 
$$
V(x):=\lim_{n\to\infty}\mathbf E [u(x+S(n));\tau_x>n]
$$
exists for every $x\in K$.

For positivity of $V$ note that 
by \eqref{diff-bound}, 
$$
\e[u(tx+S_{n_0});\tau_{tx}>n_0]
\ge u(tx)\pr(\tau_{tx}>n_0)
-\e[|S_{n_0}|^p]
-(t|x|)^{p-1}\e[|S_{n_0}|]
$$
when $p\ge 1$ and by \eqref{diff-bound2}, 
$$
\e[u(tx+S_{n_0});\tau_{tx}>n_0]
\ge u(tx)\pr(\tau_{tx}>n_0)
-\e[|S_{n_0}|^p]
$$
when $p<1$. 
Also, $C(tx)\le t^{p-\gamma}|x|^{p-\gamma}$. 
Hence, it follows from \eqref{eq.pos} that there exists $R$ such that $V(x)$ is positive for  $x\in D_{R,\gamma}$. The rest of the proof follows the corresponding part of Lemma~13 of \cite{DW15}. 
The proof is complete.

\end{document}